\newcommand{\sv}[1]{}%full version
 \newcommand{\lv}[1]{#1}%full version
\tikzstyle{legend_general}=[rectangle, rounded corners, thin,
\DeclareMathOperator{\cut}{\mathbin{\rotatebox[origin=c]{80}{\small\ding{34}}}}
\newtheorem{theorem}{Theorem}[section]
\newtheorem{proposition}[theorem]{Proposition}
\newtheorem{conjecture}[theorem]{Conjecture}
\newtheorem{question}[theorem]{Question}
\newtheorem{lemma}[theorem]{Lemma}
\theoremstyle{definition}
\newtheorem{definition}[theorem]{Definition}
\newtheorem{example}[theorem]{Example}
\newtheorem{observation}[theorem]{Observation}
\DeclareMathOperator\df{:=}
\def\epsilon{\varepsilon}
  \author{Peter Bradshaw$^\ddag$}
  \author{Tomáš Masařík$^\ddag$}
\author{Jana Novotn\'a$^\dagger$}
\author{Ladislav Stacho$^\ddag$}
\address{$\ddag$ Simon Fraser University, Burnaby, BC, Canada}
\address{$\dagger$ University of Warsaw, Warsaw, Poland}
\email{pabradsh@sfu.ca, masarik@kam.mff.cuni.cz,
janca@kam.mff.cuni.cz, ladislav\_stacho@sfu.ca}
\thanks{T.~Masařík was supported by a postdoctoral fellowship at the Simon Fraser University through NSERC grants R611450 and R611368. J.~Novotná was supported under the program of financial support for foreign internships for PhD students of the Warsaw Doctoral School of Mathematics and Computer Science. L.~Stacho was supported by NSERC grant R611368.}
\title{Robust Connectivity of Graphs on Surfaces} 
\begin{document}
\sv{
\mainmatter              % start of a contribution
\title{Robust Connectivity of Graphs on Surfaces}
\titlerunning{Robust Connectivity of Graphs on Surfaces}  
\author{Peter Bradshaw\inst{1} \and Tomáš Masařík\inst{1}
Jana Novotná\inst{2} \and Ladislav Stacho\inst{1}}
\authorrunning{Peter Bradshaw et al.} %
\institute{Simon Fraser University, Burnaby, BC, Canada,\\
\email{pabradsh@sfu.ca, masarik@kam.mff.cuni.cz, ladislav\_stacho@sfu.ca}
\and
University of Warsaw, Poland
\email{janca@kam.mff.cuni.cz}
}
}
\maketitle

\begin{abstract}
  Let $\Lambda(T)$ denote the set of leaves in a tree $T$.
  One natural problem is to look for a spanning tree $T$ of a given graph $G$ such that $\Lambda(T)$ is as large as possible.
  \lv{This problem is called \textsc{maximum leaf number}, and it is a well-known NP-hard problem.
  Equivalently, the same problem can be formulated as the \textsc{minimum connected dominating set} problem, where the task is to find a smallest subset of vertices $D\subseteq V(G)$ such that every vertex of $G$ is in the closed neighborhood of $D$.}
  \lv{Throughout recent decades, these two equivalent problems have received considerable attention, ranging from pure graph theoretic questions to practical problems related to the construction of wireless networks.}%
  \lv{%

  }%
  \lv{  Recently, a similar but stronger notion was defined by Bradshaw, Masařík, and Stacho [Flexible List Colorings in Graphs with Special Degeneracy Conditions, ISAAC 2020].}% 
  \sv{Recently, a similar but stronger notion called the \emph{robust connectivity} of a graph $G$ was introduced, which is}
  \lv{They introduced a new invariant for a graph $G$, called the \emph{robust connectivity} and written $\kappa_\rho(G)$, }%
  defined as the minimum value $\frac{|R \cap \Lambda (T)|}{|R|}$ taken over all nonempty subsets $R\subseteq V(G)$, where $T = T(R)$ is a spanning tree on $G$ chosen to maximize $|R \cap \Lambda(T)|$.
  \lv{Large robust connectivity was originally used to show flexible choosability in non-regular graphs.}%
  % \lv{

  % }%
  \lv{In this paper, we investigate some interesting properties of robust connectivity for graphs embedded in surfaces. }%
We prove a tight asymptotic bound of $\Omega(\gamma^{-\frac{1}{r}})$ for the robust connectivity of $r$-connected graphs of Euler genus $\gamma$.
Moreover, we give a surprising connection between the robust connectivity of graphs with an edge-maximal embedding in a surface and the \emph{surface connectivity} of that surface, which describes to what extent large induced subgraphs of embedded graphs can be cut out from the surface without splitting the surface into multiple parts. 
For planar graphs, this connection provides an equivalent formulation of a long-standing conjecture of Albertson and Berman\sv{.}\lv{ [A conjecture on planar graphs, 1979], which states that every planar graph on $n$ vertices contains an induced forest of size at least $n/2$.}
\sv{\keywords{Robust connectivity, graphs on surfaces, Albertson Berman's conjecture}}
\end{abstract}

\section{Introduction}
  Let $\Lambda(T)$ denote the set of leaves in a tree $T$.
  Given a graph $G$, we denote by $\tau_G$ the set of all spanning trees in $G$.
  The \emph{maximum leaf number} (or \emph{maxleaf number}) of a graph $G$ is defined as
  \lv{\[}%
    \sv{$}\ell(G)\df \max_{T\in \tau_G}| \Lambda(T)|.\sv{$}
\lv{\]}

Questions about maximum leaf number have been thoroughly considered throughout the literature, and \textsc{maximum leaf number} was one of the original NP-complete problems (even when restricted to planar graphs of maximum degree 4) identified by Garey and Johnson~\cite{GJ79}.
Storer \cite{Storer} considered the problem of finding a lower bound for the maximum leaf number of cubic graphs, and he proved that every cubic graph on $n$ vertices has a spanning tree with at least $\lceil \frac{n}{4} + 2 \rceil$ leaves. He also proved that this bound is sharp. 
\lv{Later, Griggs, Kleitman, and Shastri \cite{GriggsKleitman} proved that if a cubic graph on $n$ vertices is $3$-connected, then this lower bound can be improved to $\lceil \frac{n+4}{3} \rceil$. }%
\lv{Griggs and Wu \cite{GriggsWu} also showed that better lower bounds can be obtained for graphs of minimum degree $4$ or $5$. }%
Kleitman and West~\cite{KleitmanWest} gave an algorithm for a connected graph $G$ of minimum degree $k$ that shows $\ell(G)\ge(1-\frac{\sv{2.5}\lv{b}\ln{k}}{k})n$\sv{.}\lv{ for any constant $b>2.5$.}
  
The maximum leaf number problem can be equivalently formulated as a \emph{minimum connected dominating set} problem, which is a problem where the task is to find a smallest connected subset of vertices $D\subseteq V(G)$ of a graph $G$, such that every vertex of $G$ is in the closed neighborhood of $D$.
\sv{Both formulations of the maximum leaf number problem have been studied from the computational point of view in many areas of computer science~\cite{DW13}. }%
\lv{%
Both formulations of the maximum leaf number problem have been considered and studied from the computational point of view in many areas of computer science, including approximation-algorithms~\cite{Bonsma2011,Bonsma2012} and exact enumeration algorithms~\cite{Fomin2007,Fernau2011,Lokshtanov2018}.
Some of these research directions are motivated by a strong connection with the construction of wireless networks; consult the following book~\cite{DW13} and the survey~\cite{Du13} for more details on this vast topic.
}%
\lv{In Fellows~\cite{Fellows09}, the maxleaf number is used for a construction of an efficient parameterization for solving some basic problems, including the 3-coloring and Hamiltonian path problems.}

In this paper, we will consider a graph invariant related to maximum leaf number known as \emph{robust connectivity}\footnote{
  This parameter was formerly called game connectivity in~\cite{Flexibility-ISAAC}\lv{ and older versions of~\cite{Flexibility-arxiv}}. However, we believe that the term ``robust connectivity'' better suits the properties of this parameter.}.
The robust connectivity $\kappa_\rho(G)$ of a graph $G$ is defined as follows. 
 \begin{definition}[Robust connectivity~\cite{Flexibility-ISAAC}]
\label{def:game_connectivity}
\lv{\[}%
  \sv{$}%
  \kappa_\rho(G) \df \min_{\substack{R \subseteq V(G) \\ R \neq \emptyset}} \max_{T\in\tau_G} \frac{|R \cap \Lambda (T)|}{|R|}.
  \sv{$}
\lv{\]}%
\end{definition}
We often write $\ell(G,R)$ for the maximum value of $\frac{|R \cap \Lambda (T)|}{|R|}$ taken over all spanning trees $T$ of $G$, in which case $\kappa_\rho(G) = \min_{\substack{R \subseteq V(G) \\ R \neq \emptyset}} \ell(G,R)$.
\lv{%
  We may think of robust connectivity in terms of a one-turn game in which the first player chooses a set $R$ of vertices in $G$, and then the second player attempts to find a spanning tree in $G$ using as many vertices of $R$ as leaves as possible; see~\cite{Flexibility-arxiv} for details.
This one-turn can be also compared with the one-turn matching game used by Matuschke, Skutella, and Soto to define robust matchings~\cite{Skutella}.
}%
\lv{The notion of robust connectivity was first used in the context of flexible list colorings~\cite{Flexibility-ISAAC}.
  The \textsc{list coloring} problem is a well-known problem in which the task is to give a graph $G$ a proper coloring, called a \emph{list coloring}, in which every vertex $v \in V(G)$ uses a color from some predetermined list $L(v)$.
  The function $L$ is called a \emph{list assignment}, and the \emph{size} of a list assignment $L$ is defined as the minimum value $|L(v)|$ taken over all $v \in V(G)$.
In the flexible list coloring problem, 
we again have a graph $G$ and a list $L$ of colors at each vertex, but we also have certain vertices $v$ for which some color in $L(v)$ is \emph{preferred}; then, our task is to find a proper list coloring on $G$ that satisfies as many of these coloring preferences as possible. For a value $\epsilon > 0$, we say that a graph $G$ is \emph{$\epsilon$-flexibly $k$-choosable} if for any list assignment $L$ of size $k$, we can always find a list coloring that satisfies at least an $\epsilon$ proportion of any set of coloring preferences. }%
In \cite{Flexibility-ISAAC}, it was shown that a non-regular graph $G$ of maximum degree $\Delta$ is $\frac{\kappa_\rho(G)}{2 \Delta} $-flexibly $\Delta$-choosable.

Despite being useful for establishing bounds in certain problems like flexible list coloring, robust connectivity does not appear to be simple to calculate. 
However, in \cite{Flexibility-ISAAC}, it was shown that for graphs of bounded degree, $3$-connectivity is enough to guarantee some absolute lower bound for robust connectivity.

\begin{theorem}[Theorem 22 in \cite{Flexibility-ISAAC}\lv{\footnote{This theorem appears as Theorem 5.6~in the full version on arXiv~\cite{Flexibility-arxiv}.}}]
\label{thm:Flex_gc}
If $\Delta \geq 3$ is an integer, then there exists a value $\epsilon = \epsilon(\Delta) > 0$ such that if $G$ is a $3$-connected graph of maximum degree $\Delta$, then $\kappa_\rho(G) \geq \epsilon$. 
\end{theorem}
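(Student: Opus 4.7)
Fix an arbitrary nonempty $R \subseteq V(G)$. We reduce the problem to finding a subset $R' \subseteq R$ with $|R'| \geq \epsilon(\Delta) |R|$ satisfying the two conditions
\emph{(a)} $R'$ is an independent set of $G$, and
\emph{(b)} $G - R'$ is connected.
From such an $R'$, build $T$ by taking any spanning tree of $G - R'$ and attaching each $v \in R'$ to some neighbor in $V(G) \setminus R'$; such a neighbor exists because $R'$ is independent and, by 3-connectivity, every vertex has degree at least $3$. All vertices of $R'$ are leaves of $T$, yielding $\ell(G,R) \geq |R'|/|R| \geq \epsilon(\Delta)$.

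\textbf{Step 1 (independent subset).} Since $G$ has maximum degree $\Delta$, the induced subgraph $G[R]$ also has maximum degree at most $\Delta$, and a greedy argument produces an independent set $I \subseteq R$ with $|I| \geq |R|/(\Delta+1)$. This handles condition~\emph{(a)} if we ultimately choose $R' \subseteq I$.

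\textbf{Step 2 (preserving connectivity, the main step).} Starting from $R' = \emptyset$, we iteratively add to $R'$ a vertex $v \in I \setminus R'$ that is not a cut vertex of $G - R'$. Connectivity is preserved at every step. The goal is to prove the quantitative claim
\begin{quote}\emph{while $|R'| \leq |I|/f(\Delta)$ for a suitable $f(\Delta)$, more than half of the vertices of $I \setminus R'$ are non-cut vertices of $G - R'$,}\end{quote}
so the process adds at least $|I|/(2 f(\Delta))$ vertices before getting stuck. This then gives $\epsilon(\Delta) := \frac{1}{2(\Delta+1) f(\Delta)}$.

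To establish the claim, analyse the block-cut tree of $G - R'$. Any leaf block $B$ contains a unique cut vertex $c_B$ of $G - R'$; all other vertices of $B$ are non-cut vertices. If $c_B \in I$, then $\{c_B\}$ together with the vertices of $R'$ adjacent to $B$ forms a separator of $G$. Because $G$ is 3-connected, this separator has size at least $3$, so at least two vertices of $R'$ must be incident to $B$. Since each vertex of $R'$ has degree at most $\Delta$ in $G$, it can be incident to at most $\Delta$ such leaf blocks. A double-counting argument between leaf blocks whose cut vertex lies in $I$ and the vertices of $R'$ they are attached to therefore yields that the number of cut vertices of $G - R'$ lying in $I$ is at most $O(\Delta) \cdot |R'|$. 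Extending this counting to all blocks through the block-cut tree (or iterating on leaf blocks) shows that once $|R'|$ is a small enough fraction of $|I|$, most of $I \setminus R'$ still consists of non-cut vertices, which is exactly the claim.

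\textbf{Main obstacle.} The delicate part is the block-cut accounting: one must carefully combine 3-connectivity (to force every relevant leaf block to be ``held on'' by at least two vertices of $R'$) with the bounded-degree hypothesis (to limit how many such attachments a single vertex of $R'$ can participate in), and then propagate this local control from leaf blocks through the whole block-cut tree of $G - R'$. Getting the constants right to close the induction --- and thereby pinning down $f(\Delta)$, hence $\epsilon(\Delta)$ --- is where the main technical work lies.
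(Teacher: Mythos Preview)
The paper does not itself prove this theorem---it is quoted from \cite{Flexibility-ISAAC}---but the original method is visible in Theorem~\ref{thm:Hypergraph} and the remark preceding it, and it is the \emph{reverse} greedy to yours. After passing (via the bipartition of a spanning tree) to $R^*\subseteq R$ with $|R^*|\ge |R|/2$ such that $G$ remains connected even after deleting all edges inside $R^*$, one colours $R^*$ red and everything else blue. Each blue component has at least three red neighbours (by $3$-connectivity), while each red vertex touches at most $\Delta$ blue components; hence there are at most $\frac{\Delta}{3}|R^*|$ blue components to begin with. Now repeatedly recolour blue a red vertex adjacent to the greatest number of blue components: a pigeonhole step guarantees that components merge fast enough that only a bounded fraction of $R^*$ is spent before a single blue component remains. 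The analysis is a clean potential argument on the number of components and avoids block--cut trees entirely.

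Your reduction and Step~1 are correct and match the above (a greedy independent set in place of the bipartition trick is fine). The gap is in Step~2. Your double count does bound the number of \emph{leaf blocks} of $G-R'$ by $\frac{\Delta}{2}|R'|$, but this does not bound the number of cut vertices of $G-R'$: if the block--cut tree of $G-R'$ is a long path (for instance when $G-R'$ is itself a path on $n+1$ vertices), there are only two leaf blocks yet $n-1$ cut vertices. The phrase ``extending this counting to all blocks through the block--cut tree (or iterating on leaf blocks)'' is precisely where content is missing: a non-leaf block with exactly two cut vertices---in particular a bridge block, i.e.\ a single edge---has empty interior, so $3$-connectivity provides no $R'$-adjacency there and the double count stalls. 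The inequality you ultimately want (that the cut vertices of $G-R'$ lying in $I$ number at most $C(\Delta)\,|R'|$) may well be true, but your sketch does not establish it; the reverse greedy of Theorem~\ref{thm:Hypergraph} gives a much shorter route to the same conclusion.
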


The authors of \cite{Flexibility-ISAAC} also showed that $3$-connectivity alone is not enough to guarantee a lower bound on a graph's robust connectivity. 
To demonstrate this fact, the authors used the Levi graph of the complete $3$-uniform hypergraph $K_n^{(3)}$, described in Example~\ref{ex:highConnectivity}.
They also pointed out that $2$-connected cubic planar graphs do not have any guaranteed nonzero lower bound for robust connectivity, as demonstrated by Figure~\ref{fig:diamonds}.

\sv{\begin{SCfigure}[2][t]}
  \lv{
    \begin{figure}
    \begin{center}
    }
\begin{tikzpicture}
  [scale=\lv{1.7}\sv{1},auto=left,every node/.style={circle,fill=gray!30,minimum size = 6pt,inner sep=0pt}]
\node (a1) at (1.6,0.4) [draw = black] {};
\node (a2) at (1.6,-0.4) [draw = black, fill = black] {};
\node (a3) at (1.8,0) [draw = black] {};
\node (a4) at (1.4,0) [draw = black] {};

\node (b1) at (-1.6,0.4) [fill = white] {};
\node (b2) at (-1.6,-0.4) [fill = white] {};
%\node (b3) at (-1.8,0) [draw = black] {};
%\node (b4) at (-1.4,0) [draw = black] {};

\node(dots) at (-1.6,0) [fill = white] {$\cdots$};

\node(c1) at (0.49,1.6) [draw = black] {};
\node(c2) at (1.16,1.2) [draw = black, fill = black] {};%1.16,1.2
\node(c3) at (0.7,1.2) [draw = black] {};%0.825,1.41
\node(c4) at (0.95,1.6) [draw = black] {};%1.495,1.62

\node(d1) at (-0.49,1.6) [draw = black, fill = black] {};
\node(d2) at (-1.16,1.2) [draw = black] {};%1.16,1.2
\node(d3) at (-0.7,1.2) [draw = black] {};%0.825,1.41
\node(d4) at (-0.95,1.6) [draw = black] {};%1.495,1.62

\node(e1) at (0.49,-1.6) [draw = black, fill = black] {};
\node(e2) at (1.16,-1.2) [draw = black] {};%1.16,1.2
\node(e3) at (0.7,-1.2) [draw = black] {};%0.825,1.41
\node(e4) at (0.95,-1.6) [draw = black] {};%1.495,1.62

\node(f1) at (-0.49,-1.6) [draw = black] {};
\node(f2) at (-1.16,-1.2) [draw = black, fill = black] {};%1.16,1.2
\node(f3) at (-0.7,-1.2) [draw = black] {};%0.825,1.41
\node(f4) at (-0.95,-1.6) [draw = black] {};%1.495,1.62

\foreach \from/\to in {a1/a3,a1/a4,a3/a4,a3/a2,a4/a2,c2/c3,c2/c4,c3/c4,c3/c1,c4/c1,d2/d3,d2/d4,d3/d4,d3/d1,d4/d1,e2/e3,e2/e4,e3/e4,e3/e1,e4/e1,f2/f3,f2/f4,f3/f4,f3/f1,f4/f1,a1/c2,c1/d1,f1/e1,e2/a2, d2/b1,b2/f2}
    \draw (\from) -- (\to);
\end{tikzpicture}
\lv{\end{center}}
\caption{The graph $G$ in the figure is an arbitrarily large two-connected $3$-regular graph. If a set $R \subseteq V(G)$ is chosen as shown by the dark vertices in the figure, then there does not exist a constant $\epsilon > 0$ such that $\epsilon|R|$ vertices of $R$ may become leaves of some spanning tree of $G$. Therefore, the robust connectivity of $G$ is arbitrarily close to zero. For any $k \geq 3$, a similar $k$-regular graph with robust connectivity arbitrarily close to zero with may be constructed from a cycle $C$ by replacing each vertex of $C$ by a $k$-clique minus an edge.}
\label{fig:diamonds}
\lv{\end{figure}}
\sv{\end{SCfigure}
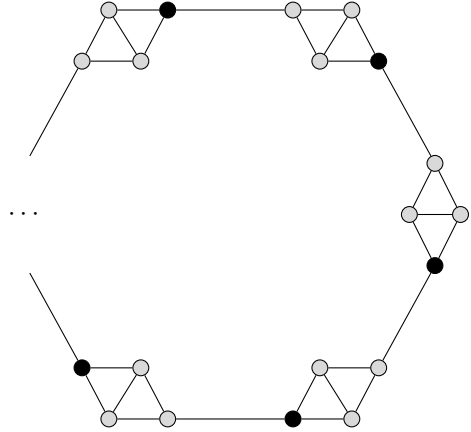}

\begin{example}[\cite{Flexibility-ISAAC}]\label{ex:highConnectivity}
Let $G$ be a graph whose vertex set consists of a set $R$ of at least four vertices and an additional vertex $v_A$ for each triplet $A \in {R \choose 3}$, and let each vertex of the form $v_A$ be adjacent exactly to those vertices in the triplet $A$. 
\end{example}
It is straightforward to show that the Levi graph $G$ of $K_n^{(3)}$ in Example \ref{ex:highConnectivity} is 3-connected.
However, no more than two vertices of $R$ may be removed from $G$ without disconnecting $G$. Therefore, for any spanning tree $T$ on $G$, the leaves of $T$ include at most two vertices of $R$. As $R$ becomes arbitrarily large, the proportion of vertices in $R$ that can be included as leaves in a spanning tree on $G$ becomes arbitrarily small. 
Therefore, Example~\ref{ex:highConnectivity} shows that some 3-connected graphs $G$ do not satisfy $\kappa_\rho(G) \geq \epsilon$ for any universal $\epsilon > 0$. 
Figure \ref{fig:counterexample} shows the Levi graph of $K_n^{(3)}$ from Example \ref{ex:highConnectivity} when $n = |R| = 5$.
Similarly, the Levi graphs of $K_n^{(r)}$ for larger uniformities $r \geq 4$ show that the robust connectivity of $r$-connected graphs may also be arbitrarily small.

\sv{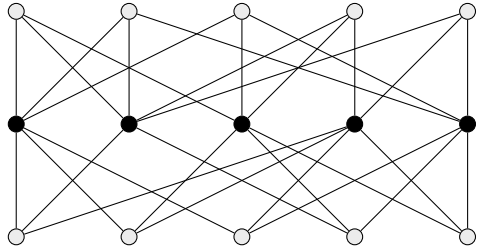
\begin{SCfigure}[2][t]}
  \lv{  \begin{figure}
    \begin{center}}
\begin{tikzpicture}
  [scale=\lv{1.5}\sv{1},auto=left,every node/.style={circle,fill=gray!15,minimum size = 6pt,inner sep=0pt}]
\node (r1) at (0,0) [draw = black, fill = black] {};
\node (r2) at (1,0) [draw = black, fill = black] {};
\node (r3) at (2,0) [draw = black, fill = black] {};
\node (r4) at (3,0) [draw = black, fill = black] {};
\node (r5) at (4,0) [draw = black, fill = black] {};
\node (u1) at (0,1) [draw = black, fill = gray!15] {};
\node (u2) at (1,1) [draw = black, fill = gray!15] {};
\node (u3) at (2,1) [draw = black, fill = gray!15] {};
\node (u4) at (3,1) [draw = black, fill = gray!15] {};
\node (u5) at (4,1) [draw = black, fill = gray!15] {};
\node (d1) at (0,-1) [draw = black, fill = gray!15] {};
\node (d2) at (1,-1) [draw = black, fill = gray!15] {};
\node (d3) at (2,-1) [draw = black, fill = gray!15] {};
\node (d4) at (3,-1) [draw = black, fill = gray!15] {};
\node (d5) at (4,-1) [draw = black, fill = gray!15] {};

\foreach \from/\to in {u1/r1,u1/r2,u1/r3,d1/r1,d1/r2,d1/r4,u2/r1,u2/r2,u2/r5,d2/r1,d2/r3,d2/r4,u3/r1,u3/r3,u3/r5,d3/r1,d3/r4,d3/r5,u4/r2,u4/r3,u4/r4,d4/r2,d4/r3,d4/r5,u5/r2,u5/r4,u5/r5,d5/r3,d5/r4,d5/r5}
    \draw (\from) -- (\to);

\end{tikzpicture}
\lv{\end{center}}
\caption{[Figure~4 in~\cite{Flexibility-ISAAC}] The figure shows the Levi graph of $K_5^{(3)}$, which is a 3-connected graph constructed based on Example~\ref{ex:highConnectivity} with $|R|=5$.
}
\label{fig:counterexample}
\lv{\end{figure}}
\sv{\end{SCfigure}}

We show a surprising connection between the notion of robust connectivity and the following famous conjecture of Albertson and Berman~\cite{AlbertsonBerman}\sv{.}\lv{, which states that every planar graph on $n$ vertices contains an induced forest of size at least $n/2$.}
\begin{conjecture}[\cite{AlbertsonBerman}]
\label{conj:AB}
If $G$ is a planar graph on $n$ vertices, then $G$ contains an induced forest of size at least $n/2$.
\end{conjecture}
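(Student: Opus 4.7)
My plan is to attempt Conjecture~\ref{conj:AB} by an extremal discharging argument on a minimum counterexample, combined with the spanning-tree leaf perspective that underlies robust connectivity.

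The first step is a reduction to planar triangulations: adding edges to a planar graph $G$ cannot increase the size of the largest induced forest, since any induced forest of the denser graph is still an induced forest of the sparser one. Hence, if a counterexample exists at all, we may augment it to a planar triangulation $G$ on $n$ vertices whose largest induced forest has size strictly less than $n/2$. Take such a maximum induced forest $F$ and write $X = V(G) \setminus F$, so $|F| < n/2 < |X|$. By maximality of $F$, every $v \in X$ must have at least two neighbors in a single component of $G[F]$, since otherwise $F \cup \{v\}$ would still be acyclic.

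The heart of the plan is a double-counting inequality between $X$ and the tree components of $G[F]$. For each component $C$ of $G[F]$, the bipartite planar subgraph on $X \cup V(C)$ has at most $2(|X| + |C|) - 4$ edges, which bounds the number of vertices of $X$ that can multi-attach to $C$. Aggregating over all components of $G[F]$ and combining with the triangulation identity $|E(G)| = 3n - 6$, one either forces $|F| \geq n/2$ directly, or locates a reducible local configuration such as a vertex of degree at most four whose neighborhood sits almost entirely in $X$; deleting such a configuration allows induction on $n$ and contradicts minimality.

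The main obstacle, and the reason this conjecture has stood open for over four decades, is that the $n/2$ bound is asymptotically tight (stacks of octahedra realize equality), so any averaging argument must simultaneously control both high-degree hubs and regions densely packed with short cycles. The direct double-counting step above yields only a bound of the form $\alpha n$ for some $\alpha < 1/2$, comparable to what is already available via acyclic colorings of planar graphs; squeezing out the remaining deficit requires a carefully tuned discharging scheme that transfers charge from low-degree vertices of $X$ into dense triangulated blocks while respecting the multi-attachment structure forced by maximality of $F$. Designing a rule set that simultaneously discharges every near-extremal configuration---in particular octahedral blocks and their stacked analogues---is where every prior attempt has broken down, and it is the step I expect to be the principal bottleneck.
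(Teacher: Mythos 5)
This statement is the Albertson--Berman conjecture, which the paper does not prove and explicitly treats as a long-standing open problem; the paper's contribution regarding it is only Theorem~\ref{thm:conjecturs}, an equivalence with Conjecture~\ref{conj:K} about the robust connectivity of planar triangulations, obtained via Theorem~\ref{thm:SgTriangulation}. So there is no proof in the paper to compare against, and your proposal must be judged as a standalone attempt on an open conjecture.

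As such, it has a genuine gap, which you yourself identify: the entire argument rests on a discharging scheme that is never specified. The sound parts of your outline --- the reduction to triangulations, the observation that maximality of $F$ forces every $v \in X$ to have two neighbors in a single component of $G[F]$, and the bipartite planarity bound $2(|X|+|C|)-4$ on attachments to each tree component $C$ --- are all standard and, as you concede, only deliver a bound of the form $\alpha n$ with $\alpha < 1/2$, which is no better than what Borodin's acyclic $5$-colorability already gives ($2n/5$). The step ``one either forces $|F| \geq n/2$ directly, or locates a reducible local configuration'' is asserted, not derived: nothing in the aggregation you describe produces a degree-at-most-four vertex with neighborhood in $X$, and even if it did, deleting such a configuration and inducting does not obviously preserve the $n/2$ ratio (you would lose up to five vertices from $n$ while the induced forest of the smaller graph need not extend). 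Since the tight examples (octahedra and their stackings) show there is no slack to absorb such losses, the reducibility-plus-discharging core of the plan is precisely the open content of the conjecture, and the proposal does not constitute a proof.
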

\lv{Albertson and Berman's conjecture can be equivalently formulated using the notion of a \emph{feedback vertex set}, which is a set of vertices $X\subseteq V(G)$ such that $G\setminus X$ does not contain any cycle. With this definition,
Conjecture \ref{conj:AB} equivalently states that every planar graph $G$ on $n$ vertices has a feedback vertex set of size at most $n/2$.}
Conjecture \ref{conj:AB} has a long history, and many partial results and theorems of a similar flavor exist; see \cite{bonamy2020fractional} for a very recent overview of the related results.
One of the large motivations for Conjecture~\ref{conj:AB} was that it would provide a proof that every planar graph on $n$ vertices has an independent set of size $\lceil n/4 \rceil$ without relying on the Four Color Theorem.
The currently best known lower bound of $\frac{2}{5}n$ is a consequence \lv{(see~\cite{Fertin02} for details)} of Borodin's theorem of 5-acyclic colorability~\sv{\cite{Borodin}}\lv{\cite{Borodin,Borodin_eng}}%
, which was already published in 1976. 
Conjecture \ref{conj:AB} is proven for only a few subclasses of planar graphs\sv{, e.g.,}\lv{:} outerplanar graphs~\cite{Hosono90}, where the tight lower bound is $\frac{2}{3}n$\sv{.}\lv{, planar triangle-free graphs~\cite{Salavatipour,Dross-triangle}, and planar graphs of girth~5~\cite{Kelly,Shi}.}
\lv{Recently, a great effort was dedicated to find a largest induced \emph{linear forest}, that is, a disjoint union of paths.
A series of papers by different authors was concluded in~\cite{Dross2019} where it is shown that triangle-free planar graph on $n$ vertices and $m$ edges has an induced linear forest with at least $\frac{9n-2m}{11}$ vertices.}

\lv{\subsection{Our Results}}
\sv{\medskip\noindent\textbf{Our results.}}
We present asymptotically tight lower bounds for the robust connectivity of $r$-connected graphs in terms of their Euler genus, for $r\ge 3$.
Recall that Figure~\ref{fig:diamonds} shows that $2$-connected planar graphs do not have any nonzero lower bound.

\begin{theorem}\label{thm:genus-gc-asy}
  If $r\geq 3$ and $G$ is an $r$-connected graph of Euler genus $\gamma$, then $\kappa_\rho(G) \geq \frac{1}{27}\gamma^{-1/r}$.
\end{theorem}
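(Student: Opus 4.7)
Our plan is to show that for every nonempty $R\subseteq V(G)$ there is a subset $S\subseteq R$ with $|S|\geq |R|/(27\gamma^{1/r})$ such that $G[V(G)\setminus S]$ is connected and every $v\in S$ has a neighbor in $V(G)\setminus S$. Such an $S$ is the leaf set of some spanning tree of $G$: take any spanning tree of $G[V(G)\setminus S]$ and attach each $v\in S$ by one edge to a neighbor outside $S$. Since $G$ is $r$-connected with $r\geq 2$, any single vertex of $R$ is a leaf of some spanning tree of $G$, so $\ell(G,R)\geq 1/|R|$ and we may assume $|R|>27\gamma^{1/r}$.

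We find $S$ probabilistically. Put $p:=1/(c\gamma^{1/r})$ for an absolute constant $c$ (we aim for $c=9$), and include each vertex of $R$ independently in $S_0$ with probability $p$, so $\mathbb{E}[|S_0|]=p|R|$. Two things can prevent $S_0$ from being a valid leaf set: (i)~some $v\in S_0$ has $N_G(v)\subseteq S_0$, and (ii)~$G[V(G)\setminus S_0]$ is disconnected. For (i), the event requires $v$ and all of its $\geq r$ neighbors to lie in $S_0\subseteq R$, so its probability is at most $p^{r+1}$; combining this with an Euler-formula bound on the bipartite subgraph between $R$ and $\{v\in R:N_G(v)\subseteq R\}$ in $\Sigma_\gamma$ yields an expected loss of $O(p^{r+1}(|R|+\gamma))$. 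For (ii), if $G-S_0$ has $k$ components, we can restore connectedness by returning at most $k-1$ reconnecting vertices from $S_0$ to $V(G)\setminus S_0$ (a spanning tree of the bipartite ``component-neighborhood'' graph, which is connected because $G$ is).

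The heart of the argument is bounding $\mathbb{E}[k]$. Each component of $G-S_0$ is a minimal set $A$ with $N_G(A)\setminus A\subseteq S_0\subseteq R$, and by $r$-connectivity $|N_G(A)\setminus A|\geq r$, so each such obstruction is fully in $S_0$ with probability at most $p^r$. A careful structural count, using Euler's formula for bipartite graphs on $\Sigma_\gamma$ applied to the bipartite ``$A$-to-boundary'' incidences, shows that the number of minimal obstructions with $N_G(A)\setminus A\subseteq R$ is $O(|R|+\gamma)$; this is the key use of the genus assumption. Hence the expected loss from (ii) is $O(p^r(|R|+\gamma))$. Combining both losses and using $|R|>27\gamma^{1/r}$, one checks that for $c=9$ the total expected loss is at most $\tfrac{2}{3}p|R|$, so $\mathbb{E}[|S|]\geq p|R|/3=|R|/(27\gamma^{1/r})$ and the desired $S$ exists.

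The principal obstacle is the structural count in step (ii): one must establish a sharp Euler-type bound of order $|R|+\gamma$ on the number of minimal vertex cuts of $G$ of size $r$ that lie inside $R$ (and more generally, on the minimal obstructing sets $A$). The exponent $1/r$ in $\gamma^{-1/r}$ then arises exactly from the interaction of this genus-linear count with the factor $p^r$ coming from $r$-connectivity, matching the asymptotics of the Levi-graph construction of Example~\ref{ex:highConnectivity}.
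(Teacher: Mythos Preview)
Your probabilistic scheme is a genuinely different route from the paper's, but as written it has a real gap at exactly the step you flag as ``the principal obstacle.'' You assert that the expected number of components of $G-S_0$ satisfies $\mathbb{E}[k-1]=O\!\bigl(p^{\,r}(|R|+\gamma)\bigr)$, and justify this by saying that each component $A$ of $G-S_0$ has $|N_G(A)\setminus A|\geq r$, so ``each such obstruction is fully in $S_0$ with probability at most $p^{r}$,'' and that the number of minimal obstructions is $O(|R|+\gamma)$. This argument does not go through. The components $A$ of $G-S_0$ are themselves random objects determined by $S_0$, so you cannot multiply a deterministic count by $p^{r}$; you need a union bound over a \emph{fixed} family of events. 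If ``minimal obstruction'' means an inclusion-minimal connected $A$ with $N_G(A)\subseteq R$, those are precisely the components of $G-R$, and their count $O((|R|+\gamma)/(r-2))$ is fine---but the indicator ``$N(C_i)\subseteq S_0$'' only detects whether $C_i$ is \emph{isolated} in $G-S_0$, which is far from controlling $k-1$. If instead you try to count minimal $r$-cuts in $R$, a single activated cut only certifies $k\geq 2$, not a bound on $k$. And the direct route via the toughness lemma gives only $\mathbb{E}[k]\leq \tfrac{2}{r-2}(p|R|+\gamma)$, with no $p^{r}$ factor; for $r=3$ this already swallows the entire gain $p|R|$.

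By contrast, the paper avoids this issue entirely by working deterministically. It first passes to a subset $R'\subseteq R$ with $|R'|\geq |R|/2$ lying in one colour class of a spanning tree (so that $G$ minus the edges inside $R'$ is still connected), removes \emph{all} of $R'$ at once, and bounds the number of resulting components by the Goddard--Plummer--Swart toughness lemma: at most $\tfrac{2}{r-2}(|R'|-2+\gamma)$. It then runs a greedy merge process---repeatedly recolouring the red vertex adjacent to the most blue components---and analyses it via a piecewise-linear majorant satisfying $f_+'(t)=\min\{1-\lceil rf(t)/(R_0-t)\rceil,-1\}$. The product-form solution of this recursion, not any union bound over cuts, is what produces the exponent $-1/r$: one obtains $\epsilon_r(d)\geq \tfrac{r-2}{\sqrt{e}\,r}(d-2)^{-1/(r-1)}$, and plugging in $d\approx \tfrac{2r}{(r-1)(r-2)}\gamma^{(r-1)/r}$ yields $\gamma^{-1/r}$. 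If you want to rescue the probabilistic approach, you would need an honest first-moment bound of the form $\mathbb{E}[k-1]\leq \sum_j \Pr[E_j]$ with $\Pr[E_j]\leq p^{r}$ and $|\{j\}|=O(|R|+\gamma)$; nothing in your sketch supplies such a family $\{E_j\}$.
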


Jing and Mohar~\cite{JingMohar} proved that the Euler genus of the Levi graph of $K_n^{(3)}$ equals $\frac{(n-2)(n+3)(n-4)}{12}$ for even values $n\geq 6$, and in general, it follows straightforwardly from Euler's formula that the Euler genus of $K_n^{(r)}$ is $\Theta( n^r)$; check Theorem~\ref{thm:constuction} for more details. 
Therefore, the Levi graphs of $K_n^{(r)}$, for which robust connectivity is at most $\frac{r-1}{n}$, show that the bound of Theorem \ref{thm:genus-gc-asy} is tight within a constant factor for each fixed $r \geq 3$.
With more careful calculations, we also derive improved lower bounds for the robust connectivity of 3-connected planar graphs.
\lv{At this point we wish to remark that even for planar 3-connected graphs, robust connectivity and maxleaf number might differ considerably.
  Take, for example, the family of graphs as depicted and described in Figure~\ref{fig:biggerblowup}.
  We will show that the robust connectivity of these graphs is at most $\frac{1}{3}$ plus small additive constant (see Theorem~\ref{thm:UB}), but their maxleaf number is greater than $\frac{n}{2}$.

\begin{figure}
  \begin{center}
\begin{tikzpicture}
  [scale=1,auto=left,every node/.style={circle, draw=black, fill=gray!30, inner sep=2pt}, black node/.style={circle, draw=black, fill=black, inner sep=3pt}, spanning edge/.style={line width=1.5pt, draw=red}, nonspanning/.style={line width=1pt, draw=black}, leaf/.style={draw=blue, line width=1.5pt}]

\node[draw=none,fill=none,minimum size=3cm,regular polygon,regular polygon sides=6,rotate=60] (a0)   {};
\node[nonspanning,fill=none,minimum size=2cm,regular polygon,regular polygon sides=6,rotate=60] (aa0)  {};

\foreach \n/\move in {90/4,210/4,330/4}{
\node[draw=none,fill=none,minimum size=3cm,regular polygon,regular polygon sides=6] (a\n)  at (\n:\move) {};
\node[nonspanning,fill=none,minimum size=2cm,regular polygon,regular polygon sides=6] (aa\n)  at (\n:\move) {};
 \node[black node] (i\n) at (\n:2) {}; % black inside
 \node[black node,leaf] (j\n) at (\n+60:5.7) {}; % black outside
}
\foreach \n in {0,90,210,330}{
 \foreach \x in {1,2,...,6}{
  \node[every node] (a\n\x) at (a\n.corner \x) {};
  \node[every node, leaf] (aa\n\x) at (aa\n.corner \x) {};
  \draw[spanning edge] (a\n\x) -- (aa\n\x);
    }
}
\foreach \n in {0,90,210,330}{
\foreach \from/\to in {1/2,2/3,3/4,4/5,5/6,6/1}{
 \node[every node,leaf] (aux1) at ($(a\n\from)!0.33!(a\n\to)$) {};
 \node[every node,leaf] (aux2) at ($(a\n\from)!0.66!(a\n\to)$) {};
 \node[every node,leaf] (aux3) at ($(aa\n\from)!0.33!(aa\n\to)$) {};
 \node[every node,leaf] (aux4) at ($(aa\n\from)!0.66!(aa\n\to)$) {};
 \draw[nonspanning] (aux1) -- (aux3);
 \draw[nonspanning] (aux2) -- (aux4);
 \draw[nonspanning] (aux2) -- (aux1);
 \draw[spanning edge] (a\n\from) -- (aux3);
 \draw[spanning edge] (a\n\to) -- (aux4);
 \draw[spanning edge] (a\n\to) -- (aux2);
 \draw[spanning edge] (a\n\from) -- (aux1);

}

 \draw [spanning edge,bend right] (a\n1) edge (a\n2);
 \draw [spanning edge,bend right] (a\n2) edge (a\n3);
 \draw [spanning edge,bend right] (a\n3) edge (a\n4);
 \draw [spanning edge,bend right] (a\n4) edge (a\n5);
 \draw [spanning edge,bend right] (a\n5) edge (a\n6);
 \draw [bend right,nonspanning] (a\n6) edge (a\n1);
}
\foreach \from/\cor/\to in {90/5/90,0/1/90, 210/6/210,0/2/210, 330/3/330, 0/5/330}
   { 
       \draw[spanning edge] (a\from\cor) -- (i\to); %black inside
   }
\foreach \from/\cor/\to in {90/4/90,0/6/90, 210/1/210,0/3/210,330/2/330,0/4/330}
   { 
       \draw[nonspanning] (a\from\cor) -- (i\to); %black inside
   }
\foreach \from/\cor/\to in {90/2/90,90/1/330,90/6/330,210/4/210,210/3/90,210/2/90,330/6/330,330/4/210, 330/5/210}
   { 
       \draw[nonspanning] (a\from\cor) -- (j\to);
   }
\foreach \from/\cor/\to in {90/3/90,210/5/210,330/1/330}
  { 
      \draw[spanning edge] (a\from\cor) -- (j\to);
  }

\end{tikzpicture}
\end{center}
\caption{
The figure shows an instance of a construction of a graph $G$ where vertices of an arbitrary cubic graph $H$ (here $K_4$) are replaced by a modified circular ladder on 36 vertices (depicted in gray), and the edges of $H$ are replaced by vertices of degree 4 (depicted in black), with neighbors as shown in the example.
  In Theorem~\ref{thm:UB}, we will show that as the size of the cubic graph $H$ used in the construction grows, the robust connectivity of $G$ approaches $\frac{1}{3}$.
  However, here the maxleaf number of $G$ is at least $\frac{61}{75}|V(G)|$.
  A spanning tree achieving this maxleaf number is shown here in red, with its leaves in blue. 
}
\label{fig:biggerblowup}
\end{figure}
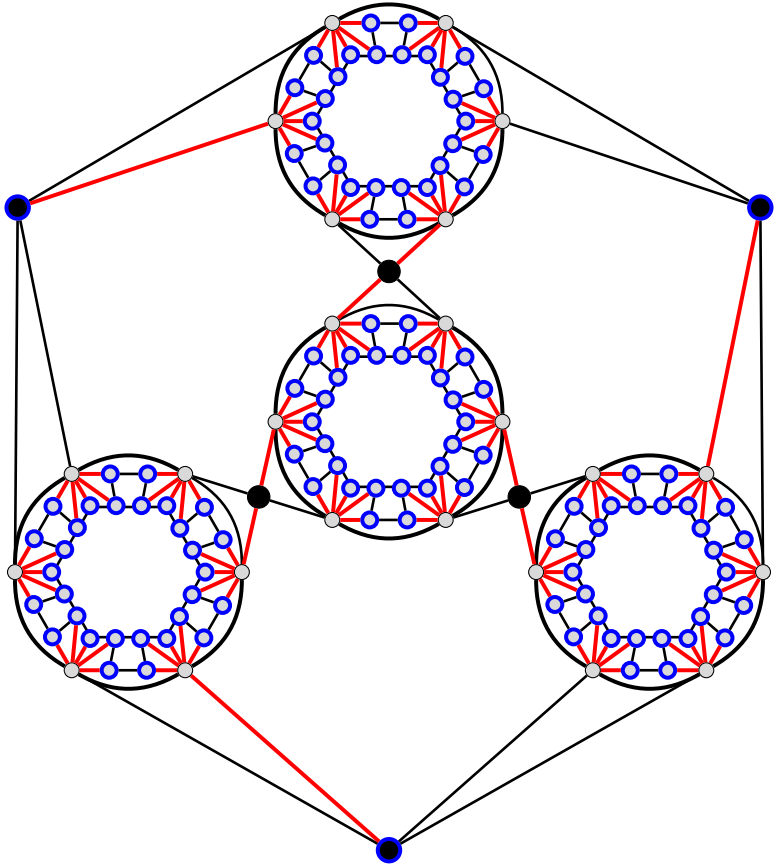
}
\begin{theorem}\label{cor:planar3con}
  If $G$ is a $3$-connected planar graph, then $\kappa_\rho(G) \geq \frac{21}{256}>\frac{1}{13}$. 
  Moreover, if $\epsilon > 0$, then there exists a planar $3$-connected graph $H$ such that $\kappa_\rho(H) \leq \frac{1}{3}+\varepsilon$.
\end{theorem}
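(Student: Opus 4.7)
The plan is to handle the two bounds separately.

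For the lower bound $\kappa_\rho(G) \ge \frac{21}{256}$, I would adapt the strategy used for Theorem~\ref{thm:genus-gc-asy} but sharpen the counting to exploit planarity directly. Fix a nonempty $R \subseteq V(G)$; the aim is to exhibit $R' \subseteq R$ of size at least $\frac{21}{256}|R|$ such that $R'$ is independent in $G$ and $G \setminus R'$ is connected. Given such an $R'$, any spanning tree of $G \setminus R'$ extends to a spanning tree of $G$ in which every vertex of $R'$ is a leaf, so $\ell(G,R) \ge |R'|/|R|$. To construct $R'$, observe that $G[R]$ is planar, so $|E(G[R])| \le 3|R|-6$ and its average degree is less than $6$; combining this with a Caro--Wei style argument produces an independent subset $I \subseteq R$ of constant density. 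A second application of planarity, together with 3-connectivity, allows one to discard from $I$ those vertices that lie in or near small vertex cuts of $G$ without losing more than a bounded fraction, yielding $R' \subseteq I$ of density at least $\frac{21}{256}$ whose removal keeps $G$ connected.

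For the upper bound, I would use the construction depicted in Figure~\ref{fig:biggerblowup}. Starting from an arbitrarily large cubic graph $H$, replace each vertex of $H$ by a modified circular ladder on $36$ vertices, and replace each edge of $H$ by a degree-$4$ vertex joined to its endpoints' ladders as shown. The resulting graph $G$ is planar and $3$-connected. Take $R$ to be the inner-ring vertices of all ladder gadgets. A local analysis inside each gadget shows that its internal structure admits only a bounded number of spanning-tree patterns near each three-vertex block, and each pattern leaves at most one vertex of that block in $R$ as a leaf of $T$. Summing across all gadgets and absorbing the $O(1)$ boundary effects yields $\ell(G,R) \le \frac{1}{3} + O(1/|V(H)|)$, so taking $|V(H)|$ large enough gives $\kappa_\rho(G) \le \frac{1}{3} + \epsilon$.

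The main obstacle is the lower bound. Extracting \emph{some} constant from the Euler-formula-based approach is routine, but achieving the specific value $\frac{21}{256}$ requires balancing two competing sources of loss: the density lost when selecting an independent subset inside $R$, and the further density lost when pruning $I$ to guarantee connectivity of $G \setminus R'$ in the presence of small cuts. The tight case analysis of how $3$-cuts of $G$ can interact with the planar induced subgraph $G[R]$, together with the numerical optimization of the two thresholds, is where the bulk of the technical work sits; the upper bound, by contrast, reduces to a finite local verification inside a single gadget.
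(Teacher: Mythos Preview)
Your lower-bound sketch has a genuine gap: neither the independent-set step nor the ``pruning near small cuts'' step is anything like what actually produces the constant $\tfrac{21}{256}$. The paper does \emph{not} require $R'$ to be independent in $G$. Instead, it takes $R'$ to be the intersection of $R$ with one colour class of a spanning tree (costing a factor $\tfrac{1}{2}$); this guarantees only that $G$ stays connected after deleting the edges inside $R'$, which is all that is needed. The key input is then a toughness bound of Goddard--Plummer--Swart: for a $3$-connected planar graph, $G\setminus R'$ has at most $2(|R'|-2)\le 2|R'|$ components. From there the paper runs a greedy procedure (Theorem~\ref{thm:Hypergraph}) that repeatedly recolours a red vertex adjacent to the most blue components, analysed via a piecewise-linear differential inequality; with $r=3$ and $d=6$ this yields the recursive constant $\epsilon_3(6)=\tfrac{21}{128}$, and together with the initial $\tfrac{1}{2}$ gives $\tfrac{21}{256}$. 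Your Caro--Wei step is both unnecessary and lossy, and your ``discard vertices near small cuts'' is not an argument at all---it gives no mechanism, no bound on how many vertices are discarded, and no route to the specific numerator $21$ and denominator $256$.

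Your upper-bound plan also misidentifies both the gadget and the set $R$. In the proof of Theorem~\ref{thm:UB} the vertices of $H$ are replaced by \emph{triangles}, not $36$-vertex ladders (the ladders in Figure~\ref{fig:biggerblowup} are there to separate maxleaf number from robust connectivity, not to prove the $\tfrac{1}{3}$ bound). The set $R$ consists of the degree-$4$ vertices $r_{uv}$ that replace the \emph{edges} of $H$, not any ring vertices. The argument is then one line: if $G\setminus R'$ is connected, the edges of $H$ corresponding to $R\setminus R'$ must contain a spanning tree of $H$, so $|R\setminus R'|\ge |V(H)|-1=\tfrac{2}{3}|R|-1$, giving $|R'|\le\tfrac{1}{3}|R|+1$. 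No local pattern analysis inside a gadget is needed or used.
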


In this direction, we may attempt to go even further and exchange the assumption of 3-connectivity with being a planar triangulation.
Note that planar triangulations on at least $4$ vertices are $3$-connected.
For planar triangulations, we formulate the following conjecture.
\begin{conjecture}
\label{conj:K}
If $G$ is a planar triangulation, then $\kappa_\rho(G) \geq \frac{1}{2}$.
\end{conjecture}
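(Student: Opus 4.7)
The plan is to prove a slightly stronger combinatorial statement: for every nonempty $R \subseteq V(G)$ there is a subset $S \subseteq R$ with $|S| \geq |R|/2$ such that $G - S$ is connected and every vertex of $S$ has at least one neighbor in $V(G) \setminus S$. By the standard equivalence between leaf sets of spanning trees and complements of connected dominating sets, such an $S$ is contained in $\Lambda(T)$ for some $T \in \tau_G$, which gives $\ell(G,R) \geq |S|/|R| \geq \tfrac{1}{2}$ and hence $\kappa_\rho(G) \geq \tfrac{1}{2}$.

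The first step is to fix a proper $4$-coloring $V_1, V_2, V_3, V_4$ of $G$, which exists by the four color theorem. Of the three bipartitions of the color classes into pairs, an averaging argument yields indices $\{i,j\}$ with $|R \cap (V_i \cup V_j)| \geq |R|/2$; take $S := R \cap (V_i \cup V_j)$. The domination condition is automatic: in a planar triangulation the link of any vertex $v$ is a cycle that avoids the color of $v$ and is properly $3$-colored, hence uses at least two colors, so every $v \in S$ has a neighbor in $V_k \cup V_l \subseteq V(G) \setminus S$.

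The crux is to guarantee that $G - S$ is connected, where $V(G) \setminus S \supseteq (V_k \cup V_l) \cup ((V_i \cup V_j) \setminus R)$. A natural three-step attack is as follows. First, argue that after performing Kempe-chain swaps inside the $4$-coloring that preserve the $|R|/2$ bound, the bipartite subgraph $G[V_k \cup V_l]$ has a bounded number of components. Second, use the ``free'' vertices $(V_i \cup V_j) \setminus R$ to bridge these components into a single connected subgraph. Third, perform a local exchange that moves any remaining ``cut'' vertex out of $S$ and substitutes another vertex of $R$ whenever possible, and show that this process terminates while $|S| \geq |R|/2$ is maintained.

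The main obstacle is precisely this connectivity control. Even though $G$ is $3$-connected, removing a bipartite set of size $\Theta(n)$ can easily disconnect a planar triangulation, and the pairing $\{i,j\}$ forced on us by the initial averaging need not be the one for which $G - S$ stays connected. Overcoming this would likely require a new structural theorem for planar triangulations in the spirit of Matheson and Tarjan's domination bound, combined with the flexibility of Kempe-chain recolorings tailored to the prescribed set $R$. As a back-up plan one could try induction on $|V(G)|$ by peeling off a vertex of degree at most $5$ (which always exists by Euler's formula) and re-triangulating the resulting face in a way compatible with the inductive hypothesis; the difficulty there is that the induced changes to $R$ and to the spanning tree under construction must be carefully bookkept in order not to lose the factor $\tfrac{1}{2}$.
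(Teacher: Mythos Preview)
The statement you are attempting to prove is a \emph{conjecture}, not a theorem: the paper does not prove it. What the paper does prove (Theorem~\ref{thm:conjecturs}, via Theorem~\ref{thm:SgTriangulation}) is that Conjecture~\ref{conj:K} is \emph{equivalent} to the Albertson--Berman conjecture (Conjecture~\ref{conj:AB}), which has been open since 1979 and for which the best known bound is $\tfrac{2}{5}$ via Borodin's acyclic $5$-coloring theorem. So a correct proof of Conjecture~\ref{conj:K} would resolve a well-known open problem, and there is no ``paper's own proof'' to compare against.

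Your proposal is candid about not being a proof: you correctly isolate the connectivity of $G-S$ as the crux, and you explicitly say that bridging the components of $G[V_k\cup V_l]$ ``would likely require a new structural theorem.'' That assessment is accurate. Already in the test case $R=V(G)$, your set $S$ equals $V_i\cup V_j$, and asking that $G-S=G[V_k\cup V_l]$ be connected is a genuine constraint that a generic $4$-coloring of a triangulation need not satisfy; Kempe swaps do not obviously repair this while preserving $|S|\ge |R|/2$. Note also that in the case $R=V(G)$ the equivalence with Albertson--Berman says that finding such an $S$ amounts to finding an induced forest of size $|V(G)|/2$; your $S=V_i\cup V_j$ is only guaranteed to induce a bipartite graph, not a forest, so the Four Color Theorem alone is not enough here---this is essentially why Conjecture~\ref{conj:AB} does not follow from the Four Color Theorem. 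The backup induction-on-$|V(G)|$ plan has the same well-known obstruction: deleting a low-degree vertex and retriangulating changes the instance in a way that does not interact cleanly with the $\tfrac{1}{2}$ threshold.

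In short, your outline identifies the right difficulty but does not overcome it, and the paper makes no claim to overcome it either.
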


\lv{
  On a similar note, it was shown already in 1990 by Albertson, Berman, Hutchinson, and Thomassen~\cite{Albertson90} that for a planar triangulation $G$ on at least 4 vertices, there is always a spanning tree without degree 2 nodes, which yields a tree that has at least $\frac{1}{2}|V(G)|$ leaves.
  In fact, the same is true for any triangulation of any surface, as shown by Chen, Ren, and Shan~\cite{Chen12}.
  Therefore, the maxleaf number is at least $\frac{1}{2}|V(G)|$ for any graph $G$ that is a triangulation of some surface.
A trivial upper bound of $\frac{2}{3}|V(G)|$ for the maxleaf number of a planar triangulation is given by a triangle and also by the icosahedron, but notably we have not found any better bound. 
}

Surprisingly, Conjecture~\ref{conj:K} turns out to be equivalent to the famous Conjecture~\ref{conj:AB}.
\begin{theorem}\label{thm:conjecturs}
Conjecture \ref{conj:AB} is equivalent to Conjecture \ref{conj:K}.
\end{theorem}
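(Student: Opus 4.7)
I will prove the equivalence by establishing both implications via a face-vertex construction that links planar graphs to planar triangulations.

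For Conjecture \ref{conj:K} $\Rightarrow$ Conjecture \ref{conj:AB}, I would start with a planar graph $H$ on $n$ vertices in a fixed planar embedding (reducing to the 2-connected case by subdivisions if necessary) and construct a planar triangulation $G$ by adding, for every face $f$ of $H$ (including the outer face), a new vertex $v_f$ adjacent to every vertex on the boundary of $f$. Every face of $G$ is then a triangle. Applying Conjecture \ref{conj:K} to $G$ with $R = V(H)$ yields a spanning tree $T$ with $|R \cap \Lambda(T)| \geq n/2$, and I set $L = R \cap \Lambda(T)$. The claim is that $H[L]$ is an induced forest of $H$, which is what Conjecture \ref{conj:AB} requires. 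The key observation is that $V(G) \setminus L$ is connected in $G$, because it contains the connected set of non-leaves of $T$ and each leaf of $T$ outside $R$ attaches to a non-leaf via its unique $T$-edge. A hypothetical cycle $C \subseteq L$ in $H$ would separate the plane into interior and exterior; some face-vertex $v_f$ lies in the interior and the face-vertex of the outer face lies in the exterior, both in $V(G) \setminus L$, yet any path between them in $G$ would have to cross $C$ and hence enter $L$, contradicting connectedness.

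For Conjecture \ref{conj:AB} $\Rightarrow$ Conjecture \ref{conj:K}, given a planar triangulation $G$ and nonempty $R \subseteq V(G)$, I plan to reverse this construction by building a planar $H$ with vertex set $R$. When $R = V(G)$ the statement follows already from the theorem of Albertson, Berman, Hutchinson, and Thomassen on spanning trees of triangulations, so I focus on $R \subsetneq V(G)$. I take $H = G[R]$ and augment it as follows: for each connected component $C$ of $G[V(G) \setminus R]$, the vertices of $N(C) \subseteq R$ appear in a cyclic order around the plane region of $C$ inherited from the embedding of $G$, and I add the corresponding cycle through these vertices to $H$, drawing it inside that region. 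The resulting $H$ is planar. Applying Conjecture \ref{conj:AB} to $H$ produces an induced forest $F$ with $|F| \geq |R|/2$. Since each added cycle cannot lie entirely in $F$, every component $C$ has a vertex of $N(C)$ in $R \setminus F$. I then argue that $V(G) \setminus F$ is a connected dominating set of $G$, from which $F$ serves as the leaf set of some spanning tree of $G$ with $|R \cap \Lambda(T)| = |F| \geq |R|/2$.

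The main obstacle is the connectivity analysis in the second direction. The goal is to show that any vertex separator $F' \subseteq F$ of $G$ yields a cycle in the augmented $H$ contained in $F$, contradicting the induced forest property. Such a separator traces a closed curve in the plane through $F'$-vertices, and the curve either follows edges of $G[F']$ or winds around components of $G[V(G) \setminus R]$, each such detour corresponding to an arc on one of the added cycles of $H$. Verifying that the resulting closed walk in $H$ always contains a cycle entirely in $F$ --- especially when multiple components share a single face of $G[R]$ and the added cycles interact --- requires careful case analysis, leveraging the triangular face structure of $G$ to constrain how separators can route through the plane.
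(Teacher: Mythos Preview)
Your direction Conjecture~\ref{conj:K} $\Rightarrow$ Conjecture~\ref{conj:AB} matches the paper's argument almost exactly: the paper also passes from a planar graph $H$ to an edge-maximal (triangulated) graph $G$ by adding a vertex in each face, takes $R=V(H)$, and observes that the leaf set $R\cap\Lambda(T)$ induces a forest because the non-leaf part of $T$ meets every face and a cycle in $R\cap\Lambda(T)$ would separate two such face-vertices. One correction: do not reduce to the 2-connected case by \emph{subdivisions}; subdividing changes $n$ and does not obviously preserve the $n/2$ bound. The right reduction (and the one the paper uses) is to add edges to $H$ until it is edge-maximal; an induced forest in the triangulated graph is a fortiori an induced forest in $H$, and now every face boundary is a simple cycle so your stellation produces a genuine simple triangulation.

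For Conjecture~\ref{conj:AB} $\Rightarrow$ Conjecture~\ref{conj:K} you are working much harder than necessary, and the ``main obstacle'' you flag is a symptom of this. The paper's key observation, stated as Lemma~\ref{lem:connectedSurface}, is that if $G$ has an edge-maximal embedding in a surface $S$ and $G'$ is a proper induced subgraph whose removal leaves $S$ connected, then $G\setminus G'$ is a connected graph. Specialised to the plane: in a planar triangulation $G$, whenever $G[R']$ is a forest, $G\setminus R'$ is connected. With this in hand you simply apply Conjecture~\ref{conj:AB} to the planar graph $G[R]$ itself --- no augmentation by extra cycles is needed --- to obtain $R'\subseteq R$ of size at least $|R|/2$ with $G[R']$ a forest; Lemma~\ref{lem:connectedSurface} gives connectivity of $G\setminus R'$, and a short argument (if $N(r)\subseteq R'$ then $G[N(r)]$ would be acyclic, yet $N(r)$ separates $G$, contradicting the contrapositive of the same lemma) shows every $r\in R'$ has a neighbour outside $R'$. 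That yields the desired spanning tree with $R'$ among its leaves. Note incidentally that your augmented $H$ contains $G[R]$ as a subgraph, so your forest $F$ already satisfies ``$G[F]$ is a forest''; the whole cycle-tracing programme you outline can thus be replaced by one invocation of Lemma~\ref{lem:connectedSurface}.
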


Hence, we propose the notion of robust connectivity as another way to attack Conjecture~\ref{conj:AB}.
In fact, we will present a further generalization of both conjectures to graphs of arbitrary Euler genus $\gamma$.
In order to do this, we develop a new notion for an arbitrary surface $S$ that, informally, describes how large of an induced subgraph we can cut out of an edge-maximal graph on $S$ without separating $S$ into multiple pieces.

We will often write $\Tilde G$ to refer to an embedding of graph $G$.
For a surface $S$, we let $\mathcal G_S$ be the family of (simple) embedded graphs on $S$. 
A graph $G$ is \emph{edge-maximal} (with respect to a surface $S$) if $G$ has an embedding $\Tilde G \in \mathcal G_S$, but for each non-edge $e\notin E(G)$, $G + e$ cannot be embedded in $S$.
More often, we will speak about an \emph{edge-maximal embedding} $\Tilde G\in \mathcal G_S$ of graph $G$
if for each $e\notin E(G)$, $e$ cannot be added to the embedding $\Tilde G$ without creating a crossing on $S$.
Note that every edge-maximal graph $G$ with respect to a surface $S$ has an edge-maximal embedding on $S$, but not every graph with an edge-maximal embedding on $S$ is edge-maximal with respect to $S$. In particular, McDiarmid and Wood \cite{McDiarmid2018} show that for any surface $S$, there exist infinitely many planar graphs with an edge-maximal embedding in $S$. In Figure~\ref{fig:edgeMaxPlanar}, we show an example of a planar graph that is not edge-maximal with respect to the projective plane but that has an edge-maximal embedding in the projective plane.
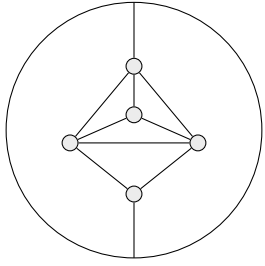
\begin{figure}
\begin{tikzpicture}
[scale=1.7,auto=left,every node/.style={circle,fill=gray!15,minimum size = 6pt,inner sep=0pt}]

\draw (0,0) circle (1);

\node (v1) at (-0.5,-0.1) [draw = black, fill = gray!15] {};
\node (v2) at (0.5,-0.1) [draw = black, fill = gray!15] {};
\node (v3) at (0,0.5) [draw = black, fill = gray!15] {};
\node (v4) at (0,-0.5) [draw = black, fill = gray!15] {};
\node(v5) at (0,0.12) [draw = black] {};
\foreach \from/\to in {v1/v2,v1/v3,v1/v4,v2/v3,v2/v4,v5/v1,v5/v2,v5/v3}
    \draw (\from) -- (\to);
    \draw (v3) -- (0,1);
    \draw (v4) -- (0,-1);
\end{tikzpicture}
\caption{The figure shows the planar graph $K_5 - e$ with an edge-maximal embedding in the projective plane. Note that the graph $K_5 - e$ is not edge-maximal with respect to the projective plane, since $K_5$ has a projective plane embedding.}
\label{fig:edgeMaxPlanar}
\end{figure}

\lv{Recently, edge-maximally embedded graphs on a surface $S$ of Euler genus $\gamma$ were shown to be at most $O(\gamma)$ edges short of a triangulation on $S$ \cite{McDiarmid2018}, which solved a long-standing open problem of Kainen~\cite{Kainen}.
In \cite{Pfender}, Davies and Pfender 
constructed an infinite family of edge-maximal graphs $G \in \mathcal G_S$ for orientable surfaces $S$ of Euler genus $\gamma\geq 4$ that are $\Theta(\gamma)$ edges short of a triangulation.
}%

For an embedded graph $\Tilde G \in \mathcal G_S$,
we write $S \cut \Tilde G$ for the surface obtained by cutting $S$ along the edges of $\Tilde G$ and puncturing $S$ at each isolated vertex of $\Tilde G$.
Then, we define $m(\Tilde G)$ to be the number of vertices in a largest induced embedded subgraph $\Tilde{G'} \subseteq \Tilde G$ for which $S \cut \Tilde{G'}$ is a connected surface.
\lv{%
In Figure~\ref{fig:TorusCurve}, we give an example of an embedded graph $\Tilde G$ in a surface $S$ with an induced embedded subgraph $\Tilde {G'}$ for which $S \cut \Tilde{G'}$ is a connected surface. }%
Now, we define the main parameter of interest.

\lv{
\lv{\begin{figure}}
  \sv{\begin{SCfigure}[2]t]}
  \begin{tikzpicture}[scale=\lv{1}\sv{0.7}, shading=ball, ball color=lightgray, every node/.style={circle,draw = black, fill=gray,inner sep=2pt}]
    % draw the sphere
    \draw[lightgray, fill=lightgray] ellipse (3 and 1.65);

    \draw (-0.8,1.1) .. controls (0.8,1.1) and (-0.2,-.5) .. (0,-0.8);   
       \node (z) at (-1.2,1.2) [draw = none, fill=none]{$v$};
       \node (z) at (-1.9,0) [draw = none, fill=none]{$w$};
       \node (z) at (-1.6,-0.8) [draw = none, fill=none]{$x$};
       \node (z) at (0.4,-0.9) [draw = none, fill=none]{$y$};
       
    \node (A) at (-0.8,1.1) {};
    \node (B) at (0,-0.8) {};
    \node (C) at (-1.5,0) {};
    \node (D) at (-1.2,-0.7) {};

   \begin{scope}[shift={(-0.4,0)}]
   \draw[dashed] (-0.5,0)  coordinate(c1) {[yscale=1] arc(180:0:.2)} coordinate(c2);
    \draw[dashed ](1,0)  coordinate(d1) {[yscale=1] arc(180:0:.2)} coordinate(d2);
   
    \draw[fill opacity=.8,shade]
      (c1) to[out=60,in=120,looseness=1.6] (d2)
      {[yscale=.5] arc(360:180:.2)}
      to[out=120,in=60,looseness=1.6] (c2)
      {[yscale=.5] arc(360:180:.2)} -- cycle;
   \end{scope}
   \foreach \from/\to in {A/C,B/C,B/D,C/D}
       \draw (\from) -- (\to);
  \end{tikzpicture}
  \caption{The figure shows an embedded diamond graph $\Tilde G$ in a surface~$S$ homeomorphic to the torus. When $\Tilde G'= \Tilde G[v,w,y]$ is the embedded subgraph induced by $v,w,y$, $S \cut \Tilde G'$ is a connected surface; however, when $\Tilde G' = \Tilde G[w,x,y]$, $S \cut \Tilde G'$ is not a connected surface.}
  \label{fig:TorusCurve}
\sv{\end{SCfigure}}
\lv{\end{figure}
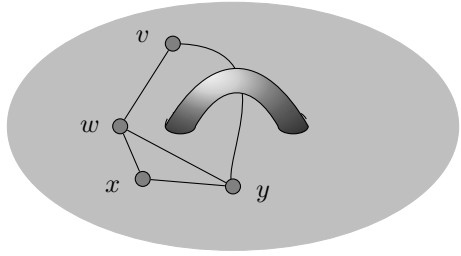}
}

\begin{definition}[Surface connectivity]
  The \emph{surface connectivity} ($\kappa_s(S)$) of a surface $S$ is defined as follows:
  \lv{\[ }%
  \sv{$}%
\kappa_s(S) = \inf \left \{\frac{m(\Tilde G)}{|\Tilde G|} : \Tilde G \in \mathcal G_S \right  \}. 
  \sv{$}%
\lv{\]}
\end{definition}

We will show that the minimum robust connectivity over all edge-maximal graphs embedded in a surface $S$ is equal to the surface connectivity of $S$.
\begin{theorem}
\label{thm:SgTriangulation}
Let $S$ be a surface. Every graph $G$ with an edge-maximal embedding on $S$ satisfies $\kappa_\rho(G) \geq k$ if and only if
$\kappa_s(S) \geq k$.
\end{theorem}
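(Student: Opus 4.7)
\medskip
\noindent\textbf{Proof plan.}
The plan is to reduce both directions of the theorem to a single technical lemma: for an edge-maximal embedding $\Tilde G\in\mathcal G_S$ and a subset $U\subsetneq V(G)$, if $S\cut \Tilde G[U]$ is connected (equivalently, $\Tilde G[U]$ has a unique face in $S$), then $V(G)\setminus U$ is a connected dominating set in $G$, so that $G$ admits a spanning tree $T$ in which every vertex of $U$ is a leaf. To establish this key lemma I would argue (i) every $v\in U$ has a neighbor outside $U$, by showing that otherwise the corners of $v$ in $\Tilde G$ (all lying in $\Tilde G[U]$ by assumption) produce at least two distinct faces of $\Tilde G[U]$, using edge-maximality to exclude degenerate bridge configurations; and (ii) $G[V(G)\setminus U]$ is connected, by routing any two vertices $x,y\in V(G)\setminus U$ through the dual graph of $\Tilde G$-faces lying inside the unique face of $\Tilde G[U]$, again invoking edge-maximality to forbid edge sets that would separate $x$ from $y$ in $G - U$.

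Granted the lemma, the forward direction ``$\kappa_s(S)\geq k \Rightarrow$ every edge-maximal $G$ has $\kappa_\rho(G)\geq k$'' is immediate. For any nonempty $R\subseteq V(G)$, the induced embedding $\Tilde G[R]$ lies in $\mathcal G_S$, so by hypothesis $m(\Tilde G[R])\geq k|R|$, and we obtain $U\subseteq R$ with $|U|\geq k|R|$ and $S\cut \Tilde G[U]$ connected. The lemma then furnishes a spanning tree $T$ with $\Lambda(T)\supseteq U$, giving $\ell(G,R)\geq |U|/|R|\geq k$.

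For the contrapositive ``$\kappa_s(S)<k \Rightarrow$ some edge-maximal $G$ has $\kappa_\rho(G)<k$,'' I would take a witness $\Tilde H\in \mathcal G_S$ with $m(\Tilde H)/|V(\Tilde H)|<k$ and extend it to a triangulation $\Tilde G^*$ of $S$ by inserting one new vertex $v_f$ inside each face $f$ of $\Tilde H$ and joining $v_f$ to every vertex of $\partial f$; this keeps $\Tilde H$ as an induced subgraph of the edge-maximal $\Tilde G^*$. With $R=V(\Tilde H)$, for any spanning tree $T$ of $G^*$ with $L=\Lambda(T)$ and $U'=L\cap R$ the aim is to show that $\Tilde H[U']$ has exactly one face in $S$, which gives $|L\cap R|\leq m(\Tilde H)<k|R|$ and hence $\kappa_\rho(G^*)<k$. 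The crux is that each added $v_f$ has all neighbors on $\partial f\subseteq V(\Tilde H)$, so no path in $G^*-L$ can cross the drawing of $\Tilde H[U']$; this forces every vertex of $V(G^*)\setminus L$ into a single face of $\Tilde H[U']$, while the domination condition (applied to any $v_f\in L$ whose neighborhood $\partial f$ would otherwise sit entirely in $U'\subseteq L$) rules out any additional empty face of $\Tilde H[U']$.

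The hard part will be the key lemma in the non-triangulation edge-maximal case, since edge-maximal embeddings need not be triangulations~\cite{McDiarmid2018}. For triangulations both halves of the lemma admit clean dual-graph arguments, but in general one must exploit the rigidity forced by edge-maximality---every missing chord of a non-triangular face of $\Tilde G$ is already drawn through some other face---to adapt the dual connectivity argument. A secondary concern is the boundary case $U=V(G)$ in the lemma (respectively $R=V(G)$ in the forward direction), where the obligatory internal vertex of any spanning tree loses an additive $1/|R|$; I expect to absorb this by dropping one carefully chosen vertex from the witness $U$ without destroying the single-face property.
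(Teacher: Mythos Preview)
Your approach is essentially the paper's: the ``key lemma'' is Lemma~\ref{lem:connectedSurface} (plus domination), the forward direction matches verbatim, and the backward direction uses the same face-vertex construction. Two points deserve correction, however.

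\textbf{The backward construction is not a triangulation, and need not be edge-maximal.} Inserting one vertex $v_f$ per face $f$ of $\Tilde H$ and joining it to every vertex of $\partial f$ does \emph{not} in general triangulate $S$, nor does it produce an edge-maximal embedding. Take $\Tilde H$ to be the path $abc$ in the sphere: the unique face has boundary walk $abcba$, and your $\Tilde G^*$ has faces including a $4$-cycle $a\,b\,c\,v_f$ in which the chord $ac$ can still be drawn. The paper fixes this by first adding edges to $\Tilde H$ until $\Tilde H$ itself is edge-maximal (this is harmless, since adding edges to $H$ can only decrease $m(\Tilde H)$), and only then adding a vertex $v_C$ in each component $C$ of $S\cut\Tilde H$. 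With $\Tilde H$ edge-maximal, no new $H$--$H$ edge can be drawn, every $v_C$--$H$ edge is present by construction, and two component vertices lie in distinct faces of $\Tilde H$, so the result is edge-maximal (though still not a triangulation in general).

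\textbf{The key lemma is not the hard part.} You flag the non-triangulation edge-maximal case as the crux, anticipating delicate dual-graph arguments. In fact the paper's proof of Lemma~\ref{lem:connectedSurface} is uniform and short: take any smooth path $P$ in $(S\cut\Tilde G')^\circ$ from $\Tilde u$ to $\Tilde v$ in general position with respect to $\Tilde G$, and walk along $P$; each time $P$ crosses an embedded edge $\Tilde e_i$, pick an endpoint $w_i$ of $e_i$ outside $G'$. Consecutive $w_{i-1},w_i$ lie on the boundary of a common face of $\Tilde G$, so by edge-maximality they are equal or adjacent. This yields a $u$--$v$ walk in $G\setminus G'$ with no case analysis on face shapes. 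The domination step is likewise clean: for $r\in R'$ in a graph with no universal vertex, $G\setminus N(r)$ is disconnected, so by the contrapositive of the lemma $S\cut\Tilde G[N(r)]$ is disconnected; since $S\cut\Tilde G[R']$ is connected, $N(r)\not\subseteq R'$. Your ``corners of $v$'' sketch is heading toward the same conclusion but is vaguer; the path argument avoids it entirely. The boundary cases ($|R|\leq 3$, universal vertex) are handled by the $3$-connectedness that the lemma itself yields, not by surgically dropping a vertex from $U$.
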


When $S$ is the plane, an edge-maximally embedded graph $\Tilde G$ is simply a planar triangulation, and $S \cut \Tilde G'$ is connected for an induced embedded subgraph $\Tilde G' \subseteq G$ if and only if $\Tilde G'$ is acyclic.
Hence, Theorem~\ref{thm:SgTriangulation} directly implies Theorem~\ref{thm:conjecturs}. 

\lv{
\medskip
\noindent
\textbf{Structure of the paper.} 
We first present a proof of Theorem~\ref{thm:SgTriangulation} in Section~\ref{sec:edgeMaximal}.
Then, in Section~\ref{sec:gcgenus}, we show the proofs of Theorems~\ref{thm:genus-gc-asy} and \ref{cor:planar3con}.
}

\sv{Due to space limitations we will provide the proofs of Theorems~\ref{thm:genus-gc-asy} and \ref{cor:planar3con} in the full version which will be available on arXiv shortly.} 

\lv{\subsection{Preliminaries}}

\lv{
For a function $f$, we define the \emph{right-hand derivative} of $f$ as follows:
\[f_+'(t) = \lim_{h \rightarrow 0^+} \frac{f(t+h)- f(t)}{h}.\]

We refer to a book of Mohar and Thomassen~\cite{MT-book} for an introduction to graphs on surfaces as well as for standard terminology that we use in our paper. For basic topological definitions and concepts, we refer to a book by Hatcher \cite{Hatcher}.
All graphs that we consider are without loops and parallel edges.
For a vertex $v$, we use $N(v)$ for the open neighborhood of $v$, which does not include $v$.

A \emph{closed surface} is a compact Hausdorff topological space in which every point has an open neighborhood that is homeomorphic to an open disk in the plane. 
A \emph{surface} then is a topological space $S$ that is obtained from a closed surface by removing finitely many path-connected open sets $D_1, \dots, D_k$ with disjoint boundaries. We say that the points on the boundaries of these sets $D_i$ form the \emph{boundary} of $S$, and we write $S^{\circ}$ for the \emph{interior} of $S$, consisting of all non-boundary points.
A surface is \emph{triangulated} if it can be realized as the union of a finite set of triangles with some of their vertices and edges identified. It is well-known (\cite[Theorem 3.1.1]{MT-book}) that every surface is homeomorphic to a triangulated surface.

A \emph{path} on a surface $S$ is a continuous function $h:[0,1] \rightarrow S^{\circ}$.
A surface $S$ is \emph{connected} if for any two points $x, y \in S^{\circ}$, there exists a path $h : [0,1] \rightarrow S^{\circ}$ for which $h(0) = x$ and $h(1) = y$. While this definition of connectedness is slightly stronger than the traditional definition of connectedness for general topological spaces, our definition of connectedness is equivalent to the traditional definition when restricted to surfaces.
For a connected triangulated closed surface $T$ with $n$ vertices, $m$ edges, and $f$ triangular faces, the \emph{Euler genus} $\gamma(T)$ of $T$ is defined as $2 - n + m - f$. For a connected surface $S$, we say that the Euler genus of $S$ is equal to $\gamma(T)$, where $T$ is a triangulated closed surface of minimum genus for which $S$ is homeomorphic to a surface obtained from $T$ by removing finitely many path-connected open sets with disjoint boundaries. It is shown in \cite[Theorem 2.44]{Hatcher} that the Euler genus is well-defined for all surfaces. In particular, for a surface $S$ homeomorphic to a sphere with $k \geq 0$ handles, the Euler genus of $S$ is $2k$, and for a surface $S$ homeomorphic to a sphere with $k$ cross-caps, the Euler genus of $S$ is $k$.

We define graph embeddings in the following way. For a graph $G$, we first realize $G$ as a topological space as follows.
We let each vertex of $G$ be represented by a point, and then we let each edge of $G$ be represented by a unit interval. We identify the endpoints of each interval corresponding to an edge $e \in E(G)$ with the points corresponding to the endpoints of $e$, and then we use the standard topology on $G$. In other words, we realize $G$ as a one-dimensional simplicial complex. Then, we define an \emph{embedding} of $G$ in a surface $S$ as a continuous injective map $f: G \hookrightarrow S^{\circ}$ for which $f$ gives a homeomorphism between $G$ and $f(G)$. We refer to the image $f(G)$ as an \emph{embedded graph}, and given a graph $G$ with an embedding function $f: G \hookrightarrow S^{\circ}$,
we will often write $\Tilde G$ for the embedded graph $f(G)$.
If $\Tilde G \in \mathcal G_S$ is an embedded graph with a corresponding graph $G$, then we write $V(\Tilde G)$ for the set of points in $S$ that are the embedded images of vertices of $G$. 
We also often write $|\Tilde G| = |G| = |V(G)|$.
For an embedded graph $\Tilde G \in \mathcal G_S$ with an associated graph $G$ and an embedding function $f: G \hookrightarrow S^{\circ}$, we define an \emph{induced embedded subgraph} of $\Tilde G$ as the image $f(G')$, where $G'$ is an induced subgraph of $G$.
Given an embedded graph $\Tilde G$ in a surface $S$, we define $S \cut \Tilde G$ to be the surface obtained as follows. First, we slightly ``thicken'' $\Tilde G$, which is formalized as adding to $\Tilde G$ an $\varepsilon$-neighborhood of each point of $\Tilde G$ for some small $\varepsilon>0$.
Such a modified $\Tilde G$ forms finitely many path-connected open sets with disjoint boundaries in $S$, and hence, by removing these open sets from $S$, we obtain a surface $S\cut  \Tilde G$.
}

\section{Induced Subgraphs on Surfaces}\label{sec:edgeMaximal}

We will show that for any surface $S$, the surface connectivity of $S$ gives a lower bound for the robust connectivity of any edge-maximal graph embedded in $S$ and furthermore that this lower bound is tight.

First, we establish an upper bound on the surface connectivity of any surface $S$. Since every surface $S$ locally resembles the plane, we may embed $K_4$ on $S$ in such a way that every triangle of $K_4$ separates $S$ into two connected components. Thus, the largest subgraph of $K_4$ that does not separate $S$ contains only two out of the four total vertices, and so $\kappa_s(S) \leq \frac{1}{2}$. When $S$ is the plane, Conjecture \ref{conj:AB} asserts that $\kappa_s(S) = \frac{1}{2}$.

\lv{
Next, we will give an example of an edge maximal graph on any surface $S$ which shows that the optimal lower bound for the robust connectivity of an edge-maximal graph embedded in $S$ cannot be greater than $\frac{1}{2}$. This example will give some insight into the general relationship between surface connectivity and robust connectivity of edge-maximal graphs. We again consider a surface $S$, and we consider a $K_4$ graph with a planar embedding in $S$. By adding a vertex $v_f$ to each triangular face in the planar embedding of $K_4$ in $S$ and then adding an edge from $v_f$ to each vertex of $f$, we obtain an edge-maximal embedding of the triakis tetrahedron. 
In Figure \ref{fig:tri}, we show a triakis tetrahedron $G$ along with a vertex set $R \subseteq V(G)$ for which $\ell (G,R) = \frac{1}{2}$, which shows that the robust connectivity of $G$ is at most $\frac{1}{2}$. In summary, we began with a graph $K_4$, which showed that $\kappa_s(S) \leq \frac{1}{2}$, and by making a small modification, we obtained an edge-maximal graph $G$ embedded in $S$ for which $\kappa_\rho(G) \leq \frac{1}{2}$. We will see that we can use this same strategy to show that the surface connectivity of a general surface $S$ gives a tight upper bound for the robust connectivity of all edge-maximal graphs embedded in $S$.

\sv{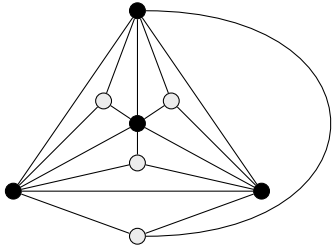
\begin{SCfigure}[1.2][t]}
\lv{
  \begin{figure}
  \begin{center}
  }
\begin{tikzpicture}
[scale=1.5,auto=left,every node/.style={circle,fill=gray!15,minimum size = 6pt,inner sep=0pt}]
\node (r1) at (2,-0.55) [draw = black, fill = gray!15] {};
\node (r2) at (1.7,0) [draw = black, fill = gray!15] {};
\node (v1) at (2,-0.2) [draw = black, fill = black] {};
\node (r4) at (2.3,0) [draw = black, fill = gray!15] {};
\node (r5) at (2,-1.2) [draw = black, fill = gray!15] {};
\node (v2) at (2,0.8) [draw = black, fill = black] {};
\node (v3) at (0.9,-0.8) [draw = black, fill = black] {};
\node (v4) at (3.1,-0.8) [draw = black, fill = black] {};
\draw (r5) to [out=0,in=0,looseness = 02.8] (v2);
\foreach \from/\to in {v1/v2,v1/v3,v1/v4,v2/v3,v2/v4,v3/v4,r1/v3,r1/v4,r1/v1,r5/v3,r5/v4,r2/v1,r2/v2,r2/v3,r4/v1,r4/v2,r4/v4}
    \draw (\from) -- (\to);
\end{tikzpicture}
\lv{\end{center}}
\caption{The graph $G$ shown is the triakis tetrahedron. A vertex set $R \subseteq V(G)$ is shown in black, and $\ell(G,R) = \frac{1}{2}$, which shows that $\kappa_\rho(G) \leq \frac{1}{2}$.
}
\label{fig:tri}
\lv{\end{figure}}
\sv{\end{SCfigure}}
}

\lv{We will need the following lemma.}

\begin{lemma}
\label{lem:connectedSurface}
Let $G$ be a graph, and let $\Tilde G$ be an edge-maximal embedding of $G$ in a surface $S$. Suppose $\Tilde G'$ is a proper induced embedded subgraph of $\Tilde G$ corresponding to an induced subgraph $G' \subseteq G$. If $S \cut \Tilde G'$ is a connected surface, then $ G \setminus  G'$ is a connected graph.
\end{lemma}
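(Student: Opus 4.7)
The plan is to prove the contrapositive by contradiction: assume $G \setminus G'$ is disconnected and use the connectedness of $S \cut \Tilde G'$ to locate an arc that realizes a missing edge of $\Tilde G$, contradicting edge-maximality.

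First, I would establish the key geometric claim: if $G \setminus G'$ has two distinct components $C_a, C_b$, then there is a face $F$ of the embedding of $G \setminus G'$ inside the surface $S \cut \Tilde G'$ whose boundary meets both $C_a$ and $C_b$. To see this, take any point $p_a \in C_a$ and $p_b \in C_b$ and use connectedness of $S \cut \Tilde G'$ to pick a path $\gamma$ between them. After a small perturbation, $\gamma$ crosses $G \setminus G'$ transversally in finitely many points, and the arcs of $\gamma$ between consecutive crossings lie inside single faces. A standard relation-chaining argument on the sequence of components visited then produces two consecutive crossings in distinct components, yielding a single face whose boundary contains vertices of both components.

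Second, picking vertices $u \in V(C_a)$ and $v \in V(C_b)$ on the boundary of this face $F$, I note that $uv \notin E(G)$: both lie in $V(G) \setminus V(G')$, so any edge between them would belong to the induced subgraph $G \setminus G'$, contradicting that $u$ and $v$ lie in different components. I then draw a simple arc $\alpha$ from $u$ to $v$ through the interior of $F$, intersecting $G \setminus G'$ only at its endpoints. Since $F$ lies in the interior of $S \cut \Tilde G'$, the image of $\alpha$ under the identification $S \cut \Tilde G' \to S$ avoids $\Tilde G'$ entirely, and therefore meets $\Tilde G = (\Tilde G \setminus \Tilde G') \cup \Tilde G'$ only at $u$ and $v$.

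Finally, the arc $\alpha$ realizes the non-edge $uv$ as a curve in $S$ that can be added to $\Tilde G$ without introducing a crossing, contradicting the fact that $\Tilde G$ is an edge-maximal embedding of $G$ on $S$. The main subtlety I expect to handle carefully is the topological argument in the first step: justifying that in a connected surface with boundary (namely $S \cut \Tilde G'$), a disconnected embedded graph must admit a face whose boundary touches two different components. Once this is in hand, the edge-maximality argument is immediate, so I would spend most of the care on a clean treatment of that step, probably by describing the face-adjacency structure explicitly and invoking transversality of the connecting curve.
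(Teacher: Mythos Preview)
Your overall strategy matches the paper's: both take a curve in $(S\cut\Tilde G')^\circ$ joining two vertices of $G\setminus G'$, perturb it to meet the embedded graph transversally in finitely many points, and then exploit edge-maximality on a common face. The paper does this constructively --- for each edge of $\Tilde G$ that the curve crosses it selects an endpoint lying outside $V(G')$ (such an endpoint exists because $G'$ is induced), and argues via edge-maximality that consecutive selections lie on a common face of $\Tilde G$ and hence are equal or adjacent, producing a walk in $G\setminus G'$. You recast this as a contradiction, locating one face meeting two components and drawing a single new arc there.

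There is, however, a real gap in the step where you conclude that $\alpha$ meets $\Tilde G$ only at $u$ and $v$. You take $F$ to be a face of the \emph{induced subgraph} $G\setminus G'$ inside $S\cut\Tilde G'$, so by construction $\alpha$ avoids both $\Tilde G'$ and the embedding of $G\setminus G'$. But the point set $\Tilde G\setminus\Tilde G'$ is strictly larger than the embedding of the induced subgraph $G\setminus G'$: it also contains the interiors of all edges of $G$ running between $V(G')$ and $V(G)\setminus V(G')$. These cross-edges survive (as truncated arcs) inside $S\cut\Tilde G'$ and may well lie in the interior of your face $F$, so $\alpha$ can cross them; in that case the curve $uv$ cannot be added to $\Tilde G$ without a crossing and the contradiction evaporates. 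The repair is easy --- work with faces of the full embedding $\Tilde G$ from the start, exactly as the paper does, or equivalently include the cross-edges in the graph whose face structure you analyse --- but as written the inference ``therefore meets $\Tilde G$ only at $u$ and $v$'' does not go through.
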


\lv{
\begin{proof}
Let $ u,  v \in V( G \setminus  G')$.
Since $S \cut \Tilde G'$ is connected,
we may let $f:[0,1] \rightarrow (S \cut \Tilde G')^{\circ}$ be a path with $f(0) = \Tilde u$ and $f(1) = \Tilde v$. Since $P = f([0,1])$ is compact, we may choose $P$ to be smooth, and by perturbing $P$, we may assume that $P$ only intersects the vertices of $\Tilde G$ at $\Tilde u$ and $\Tilde v$. We may furthermore assume that the values of $t \in (0,1)$ for which $f$ maps $t$ to an embedded edge of $\Tilde G$ are isolated points in $(0,1)$. Since $P$ is compact, this implies that only finitely many values $t \in (0,1)$ are mapped by $f$ to an edge of $\Tilde G$, and we let $t_1, \dots, t_k$ be the set of values in $(0,1)$ that are mapped to an edge of $\Tilde G$ by $f$.

We now construct a walk $W$ on $G \setminus G'$ from $u$ to $v$ using $f$ as follows. We let $W$ begin with $w_0 = u$. Now, for each $i \in \{1,\dots, k\}$, let $\Tilde e_i$ be the embedded edge to which $f(t_i)$ belongs. Iterating through $i \in \{1,\dots, k\}$ in increasing order, we let $w_i$ be an endpoint of $e_i$ not belonging to $G'$. Since $e_i \not \in E(G')$, such a vertex $w_i$ must exist. We claim that $w_i$ is equal to or adjacent to $w_{i-1}$. By construction, the vertices $\Tilde w_{i-1}$ and $\Tilde w_i$ must be incident to a common component of $S \cut \Tilde G$, and it thus is possible to draw an edge $e$ from $\Tilde w_{i-1}$ to $\Tilde w_i$ without introducing a crossing on $S$. Therefore, since $\Tilde G$ is edge-maximal, it must follow that adding $e$ would create a loop or multiple edge in $G$, implying that $w_{i-1}$ and $w_i$ are equal or adjacent, and that we may extend $W$ by appending $w_i$ to its end. Therefore, $u, w_1, \dots, w_k$ is a walk in $G \setminus G'$. Finally, by construction, $\Tilde w_k$ must be incident to a common component of $S \cut \Tilde G'$ with $v$, so by the same argument, $w_k$ is equal to or adjacent to $v$. Therefore, $u, w_0, w_1, \dots, w_k, v$ is a walk from $u$ to $v$ in $G \setminus G'$. Since $u,v$ can be any vertex pair in $G \setminus G'$, it follows that $G \setminus G'$ is connected.
\end{proof}
}

\sv{The proof of the Lemma can be found in the full version of or paper.}
We note that Lemma \ref{lem:connectedSurface} immediately implies that an edge-maximal graph on a surface with at least four vertices is $3$-connected, since in a simple graph, a pair of vertices can only induce an edge, which cannot separate a surface.
We also note that edge-maximality is necessary for Lemma \ref{lem:connectedSurface}. For example, if $\Tilde T$ is an embedded tree in a surface $S$, $S \cut \Tilde T'$ is connected for any induced embedded subgraph $\Tilde T'$ of $\Tilde T$, but $T \setminus T'$ is often disconnected.
Now, we are ready to prove our main result of this section.

\begin{proof}[Proof of Theorem~\ref{thm:SgTriangulation}]
Suppose first that $\kappa_s(S) \geq k$, or in other words, that every embedded graph $\Tilde H \in \mathcal G_S$ on $n$ vertices has an induced embedded subgraph $\Tilde H'$ of size at least $kn$ for which $S \cut \Tilde H'$ is a connected surface.
As shown above using $K_4$, $k \leq \frac{1}{2}$.
Let $G$ be a graph with an edge-maximal embedding on $S$. We note that since every edge-maximal graph $G$ with at most three vertices is a clique and hence satisfies $\kappa_\rho(G) > \frac{1}{2} \geq k$, it suffices only to consider edge-maximal graphs $G$ on at least four vertices. 
In particular, we may assume by Lemma \ref{lem:connectedSurface} that $G$ is $3$-connected.

Now, let $R \subseteq V(G)$. 
If $|R| \leq 3$, then since $G$ is $3$-connected, we may use at least $2 \geq \frac{2}{3}|R| > k|R|$ vertices of $R$ as leaves of some spanning tree on $G$, and we are done in this case.
Now, suppose $|R| \geq 4$. If $G$ has an universal vertex, then we may find a spanning tree on $G$ that uses at least $|R| - 1 \geq \frac{3}{4}|R| > k|R|$ vertices of $R$ as leaves, and we are done. Otherwise,
we consider the graph $\Tilde G[R]$ embedded in $S$. By our hypothesis, we may find a subset $R' \subsetneq R$ of size at least $k|R|$ for which $S \cut \Tilde G[R']$ is a connected surface. We claim that we may find a spanning tree on $G$ that includes every vertex of $R'$ as a leaf. Indeed, as $S \cut \Tilde G[R']$ is connected, and as $G$ is edge-maximal, it follows from Lemma \ref{lem:connectedSurface} that $G \setminus R'$ is a connected graph. Furthermore, since $G$ has no universal vertex,
$G \setminus N(r)$ is a disconnected graph for each $r \in R'$,
so by Lemma \ref{lem:connectedSurface}, $S \cut \Tilde G[N(r)]$
is a disconnected surface.
Therefore, for each $r \in R'$, at least one neighbor of $r$ does not belong to $R'$. 
Hence, one may take any spanning tree $T$ on $G \setminus R'$, and $T$ will dominate $R'$; then one may add each vertex of $R'$ as a leaf of $T$. As $|R'| \geq k|R|$, and as the choice of $R$ was arbitrary, it follows that $\kappa_\rho(G) \geq k$.

Suppose, on the other hand, that every graph $G$ with an edge-maximal embedding on $S$ satisfies $\kappa_\rho(G) \geq k$. 
Let $\Tilde H$ be a graph embedded in $S$.
We seek an induced embedded subgraph $\Tilde H' \subseteq \Tilde H$ of size at least $k |\Tilde H|$ for which $S \cut \Tilde H'$ is a connected surface.
It will make our task no easier to add edges to $\Tilde H$ until $\Tilde H$ is edge-maximal.
Now, let $\Tilde G$ be a graph embedded in $S$ obtained by adding a vertex $v_C$ to each connected component $C$ of $S \cut \Tilde H$ and making $v_C$ adjacent to all vertices incident to $C$.
We call these vertices $v_C$ \emph{component vertices}. Clearly, $\Tilde G$ is an edge-maximal embedding, since every possible edge between vertices of $\Tilde H$ is included, and every possible edge between a component vertex and a vertex of $\Tilde H$ is included. Now, let $R \subseteq V(G)$ denote the vertices that originated in $H$. 
Since $G$ has an edge-maximal embedding, we know that $\kappa_\rho(G) \geq k$, and hence we may choose a spanning tree $T$ on $G$ that includes at least $k|R|$ of the vertices of $R$ as leaves. Then $T' = T \setminus ( \Lambda (T) \cap R)$ is a connected graph that spans all component vertices of $G$. We claim that if $H' =  H[\Lambda (T) \cap R]$, then $S \cut \Tilde H'$ is a connected surface. Indeed, if $S \cut \Tilde H'$ is disconnected,
then there must exist two component vertices of $G$ in distinct connected components of $S \cut \Tilde H'$, and $T'$ cannot contain both of these component vertices, a contradiction.
Therefore, the induced subgraph $\Tilde H'$ is a graph of size at least $k|R| = k|\Tilde H|$, and $S \cut \Tilde H'$ is a connected surface. \sv{\qed}
\end{proof}
While Conjecture \ref{conj:AB} together with Theorem \ref{thm:SgTriangulation} predicts that the surface connectivity of the plane is $\frac{1}{2}$, it is not clear what the surface connectivity of surfaces of higher genus should be. We note that for orientable surfaces $S$ of Euler genus at least $2$ and nonorientable surfaces of Euler genus at least $3$, $K_7$ can be embedded in such a way that any four vertices induce a separating subgraph, which is shown for the torus in Figure \ref{fig:K7}. This implies that $\kappa_s(S) \leq \frac{3}{7}$ for all surfaces except possibly for the plane, the projective plane, and the Klein bottle.

\begin{figure}
  \begin{center}
\begin{tikzpicture}
[scale=1.2,auto=left,every node/.style={circle,fill=gray!30,minimum size = 0pt,inner sep=0pt}]

\node (ll0) at (-0.5,-1) [draw = white, fill = white] {};
\node (ll4) at (2.5,-1) [draw = white, fill = white] {};
\node (ll2) at (1,-1) [draw = white, fill = white] {};
\node (ll15) at (0.5,-1) [draw = white, fill = white] {};
\node (ll1) at (0,-1) [draw = white, fill = white] {};
\node (uu2) at (1,3) [draw = white, fill = white] {};
\node (uu15) at (0.5,3) [draw = white, fill = white] {};
\node (uu1) at (0,3) [draw = white, fill = white] {};
\node (uu0) at (-1.5,3) [draw = white, fill = white] {};
\node (uu05) at (-0.75,3) [draw = white, fill = white] {};
\node (ll05) at (-0.75,-1) [draw = white, fill = white] {};
\node (ur) at (2.5,3) [draw = white, fill = white] {};

\node (rm) at (2.5,1) [draw = white, fill = white] {};
\node (rm15) at (2.5,1.5) [draw = white, fill = white] {};
\node (lm) at (-1.5,1) [draw = white, fill = white] {};
\node (lm15) at (-1.5,1.5) [draw = white, fill = white] {};
\node (lm2) at (-1.5,2) [draw = white, fill = white] {};
\node (lm05) at (-1.5,0.5) [draw = white, fill = white] {};
\node (lm0) at (-1.5,0) [draw = white, fill = white] {};
\node (rm05) at (2.5,0.5) [draw = white, fill = white] {};
\node (rm0) at (2.5,0) [draw = white, fill = white] {};

\node (rm2) at (2.5,2) [draw = white, fill = white] {};
\node (uuu) at (1.75,3)[draw = white, fill = white] {};
\node (lll) at (1.75,-1)[draw = white, fill = white] {};
\node (uuuu) at (1.75,3)[draw = white, fill = white] {};
\node(ll) at (-1.5,-1)[draw = white, fill = white] {};

\node (l1) at (0,0) [draw = black,minimum size = 6pt] {};
\node (l2) at (1,0) [draw = black,minimum size = 6pt] {};

\node (m1) at (-0.5,1) [draw = black,minimum size = 6pt] {};
\node (m2) at (0.5,1) [draw = black,minimum size = 6pt] {};
\node (m3) at (1.5,1) [draw = black,minimum size = 6pt] {};

\node (u1) at (0,2) [draw = black,minimum size = 6pt] {};
\node (u2) at (1,2) [draw = black,minimum size = 6pt] {};

\draw (-1.5,-1) -- (2.5,-1);
\draw (-1.5,3) -- (2.5,3);
\draw (-1.5,-1) -- (-1.5,3);
\draw (2.5,3) -- (2.5,-1);

\foreach \from/\to in {l1/l2,m1/m2,m2/m3,u1/u2,m2/u1,m2/u2,m1/u1,m3/u2,l1/m1,l1/m2,l2/m2,l2/m3,u1/uu1,l1/ll1,l2/ll2,u2/uu2,
ll05/l1,
l1/ll,
%m3/ll4,
lm2/uu05,
m1/lm05,
rm0/lll,
l2/ll15,uu15/u1,lm15/u1,rm05/l2,%m1/lm2,
u2/uuu,
m3/rm15,
m3/rm2,
m1/lm,m3/rm,m1/lm0,u2/ur}
    \draw (\from) -- (\to);

\end{tikzpicture}
\end{center}
\caption{The figure shows a $K_7$ embedded the torus in such a way that any four vertices induce a face-bounding triangle that separates the surface when removed.}
\label{fig:K7}
\end{figure}
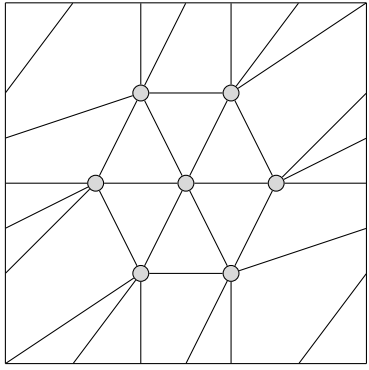

\lv{
\section{Graphs of Bounded Genus}\label{sec:gcgenus}
In this section, we consider embedded graphs on surfaces that are not necessarily edge-maximal, but that are $r$-connected for some value $r \geq 3$. The main goal of this section will be to obtain lower bounds for the robust connectivity of $r$-connected graphs of Euler genus $\gamma$. We will also obtain improved lower bounds for robust connectivity when $\gamma \leq 2$, and we will give examples showing that our bounds are tight within a constant factor.

As a first step in obtaining lower bounds for robust connectivity, we prove a theorem which essentially states that for an $r$-connected graph $G$, with $r \geq 3$, if $R \subseteq V(G)$ is a cutset and $G \setminus R$ does not have too many components, then $R$ has a fairly large subset $R'$ for which $G \setminus R'$ is connected. In our proof of the theorem, we use a greedy procedure in which we iteratively remove from $R$ a vertex with neighbors in the greatest number of distinct components of $G \setminus R$. When the greedy procedure terminates, only one component of $G \setminus R$ remains, and the remaining vertices in $R$ form the set $R'$. A weaker version of the theorem was shown in \cite{Flexibility-arxiv} (Lemma 5.5) using a crude analysis of the same greedy method, but here we give a more careful analysis in order to obtain a better lower bound for the size of $R'$.

In order to estimate how large we can make the set $R'$, we define the following parameter. 
Let $r \geq 3$ be an integer. For real $0 < d \leq 2$, we define $\epsilon_r(d) = 1 - \frac{d}{r}$, and for real $d > 2$, we recursively define $\epsilon_r(d) =  \epsilon_{r}(\lceil d \rceil -1) \left (1 - \frac{1 + d - \lceil d \rceil }{(r-1)(\lceil d \rceil -1)} \right )$. Before we begin to prove that we can obtain a large set $R'$ as described above, we estimate the size of these values $\epsilon_r(d)$. We will only estimate $\epsilon_r(d)$ for integer values of $d$, since we can always use the bound $\epsilon_r(d) \geq \epsilon_r(\lceil d \rceil)$ to get a decent estimate for non-integer values $d$.

\begin{proposition}
\label{rem:sqrt}
For all integers $d \geq 2$ and $r \geq 3$,
$\epsilon_{r}(d) \geq \frac{r - 2}{\sqrt{e} r} (d-2)^{-\frac{1}{r-1}}$.
\end{proposition}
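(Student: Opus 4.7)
The approach is to unfold the recursion for $\epsilon_r(d)$, take logarithms, and estimate the resulting sum using the elementary inequality $\ln(1-x)\ge -x-x^{2}$ (valid on $[0,1/2]$). I will focus on integer $d\ge 3$; the boundary case $d=2$ reduces to $\epsilon_r(2)=(r-2)/r$, which can be read off from the base clause of the definition.

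First I would unfold the recursion. For integer $d\ge 3$ the definition gives $\epsilon_r(d)=\epsilon_r(d-1)\bigl(1-\tfrac{1}{(r-1)(d-1)}\bigr)$, which iterates (starting from $\epsilon_r(2)=(r-2)/r$) to
\[\epsilon_r(d)=\frac{r-2}{r}\prod_{j=2}^{d-1}\left(1-\frac{1}{(r-1)j}\right).\]
Taking logarithms reduces the proposition to the inequality
\[\sum_{j=2}^{d-1}\ln\!\left(1-\frac{1}{(r-1)j}\right)\ \ge\ -\frac{1}{2}-\frac{\ln(d-2)}{r-1}.\]

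Next I would apply $\ln(1-x)\ge -x-x^{2}$ term by term. Since $r\ge 3$ and $j\ge 2$ force $\frac{1}{(r-1)j}\le \tfrac14$, the inequality applies and I obtain
\[\sum_{j=2}^{d-1}\ln\!\left(1-\frac{1}{(r-1)j}\right)\ \ge\ -\frac{1}{r-1}\sum_{j=2}^{d-1}\frac{1}{j}-\frac{1}{(r-1)^{2}}\sum_{j=2}^{d-1}\frac{1}{j^{2}}.\]
For the harmonic part I would peel off the $j=2$ term and then integrate, giving
\[\sum_{j=2}^{d-1}\frac{1}{j}\ \le\ \frac{1}{2}+\ln\!\left(\frac{d-1}{2}\right)\ \le\ \ln(d-2)+\frac{1}{2},\]
where the last step uses $(d-1)/2\le d-2$, which is equivalent to $d\ge 3$ (sharp at $d=3$). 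For the tail sum I would use $\sum_{j=2}^{d-1}1/j^{2}<\pi^{2}/6-1<2/3$.

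Plugging these in, the overall lower bound becomes
\[\sum_{j=2}^{d-1}\ln\!\left(1-\frac{1}{(r-1)j}\right)\ \ge\ -\frac{\ln(d-2)}{r-1}-\frac{1}{2(r-1)}-\frac{2/3}{(r-1)^{2}},\]
and it remains to verify $\tfrac{1}{2(r-1)}+\tfrac{2/3}{(r-1)^{2}}\le \tfrac12$ for every $r\ge 3$; at $r=3$ this reads $\tfrac14+\tfrac16=\tfrac{5}{12}<\tfrac12$, and both summands decrease in $r$. The main obstacle is keeping the constants tight: the target prefactor is $e^{-1/2}$, so the combined slack coming from the $-x^{2}$ correction and from the gap between $\sum_{j=2}^{d-1}1/j$ and $\ln(d-2)$ must jointly fit inside $\tfrac12$. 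Replacing the harmonic estimate by the naive $\sum_{j=2}^{d-1}1/j\le \ln(d-1)$ would already exhaust this budget at $d=3$; separating out the $j=2$ term before integrating is exactly what makes the $1/\sqrt e$ constant attainable.
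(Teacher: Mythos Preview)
Your proof is correct and follows the same overall strategy as the paper: unfold the recursion into a product, take logarithms, and bound the resulting harmonic-type sum. The only real difference is in the elementary logarithm estimate. The paper uses $\ln(1-x)>-\tfrac{x}{1-x}$, which turns each term $1-\tfrac{1}{(r-1)j}$ into $\exp\bigl(-\tfrac{1}{(r-1)j-1}\bigr)$; the simple index shift $\tfrac{1}{(r-1)j-1}<\tfrac{1}{(r-1)(j-1)}$ then yields exactly $\tfrac{1}{r-1}H_{d-2}$, and $H_{d-2}\le\ln(d-2)+1$ together with $\tfrac{1}{r-1}\le\tfrac12$ finishes the argument without any second-order correction. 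Your choice $\ln(1-x)\ge -x-x^{2}$ produces the extra $\sum 1/j^{2}$ term, which forces the additional numerical check $\tfrac{1}{2(r-1)}+\tfrac{2/3}{(r-1)^{2}}\le\tfrac12$; this works, but the paper's route is slightly cleaner precisely because the $-x/(1-x)$ bound plus the index shift absorbs the quadratic slack automatically.
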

\begin{proof}
We have the recursion $\epsilon_{r}(2) = \frac{r-2}{r}$, $\epsilon_{r}(d) = \left ( 1 - \frac{1}{(r-1)(d-1)} \right ) \epsilon_{r}(d-1)$ for $d \geq 3$. Therefore, for each integer $d \geq 2$, 
\begin{eqnarray*}
\epsilon_{r}(d) &=& \frac{r-2}{r} \cdot \left ( 1 - \frac{1}{2(r - 1) } \right) \left ( 1 - \frac{1}{3(r - 1) } \right) \dots
\left ( 1 - \frac{1}{(d-1)(r - 1) } \right) \\
& > & \frac{r-2}{r} \exp \left ( - \frac{1}{2(r - 1) - 1}  - \frac{1}{3(r - 1)-1} - \frac{1}{4(r - 1)-1} - \dots - \frac{1}{(d-1)(r - 1)-1} \right ) \\
&>&\frac{r-2}{r} \exp \left ( -\frac{1}{r-1} H_{d-2} \right ) ,\\
\end{eqnarray*}
where $H_n$ is the $n$th harmonic number.
Using the inequality $ H_n \leq \log n + 1$, 
\[
\epsilon_{r}(d) 
>\frac{r-2}{r} \exp \left ( - \frac{1}{2} - \frac{1}{r-1} \log (d-2) \right ) \\               
=\frac{r - 2}{\sqrt{e} r} (d-2)^{-\frac{1}{r-1}}. \lv{\hfill \qedhere}
\]
\end{proof}

\begin{theorem}
\label{thm:Hypergraph}
Let $d > 0$ be a real number and $r \geq 3$ an integer.
Let $G$ be an $r$-connected graph, and let $R \subseteq V(G)$ be a subset of vertices such that $G$ is connected even after removing all edges with both endpoints in $R$.
If the number of components of $G \setminus R$ is at most $\frac{d}{r}|R|$,
    then there exists a set $R' \subseteq R$ of at least $ \epsilon_r(d) |R|  $ vertices such that $G \setminus R'$ is a connected graph.    
\end{theorem}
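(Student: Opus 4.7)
The plan is to prove the theorem via a greedy procedure: set $R_0 := R$, and while $G \setminus R_i$ is disconnected, pick a vertex $v_i \in R_i$ whose neighborhood meets the largest number of connected components of $G \setminus R_i$, and put $R_{i+1} := R_i \setminus \{v_i\}$; terminate with $R' := R_i$ as soon as $G \setminus R_i$ becomes connected. Writing $n_i := |R_i|$, $c_i$ for the number of components of $G \setminus R_i$, and $d_i := rc_i/n_i$ for the current ``density,'' we have $d_0 \le d$, and the task reduces to bounding the number of greedy steps so that $|R'| \ge \epsilon_r(d)|R|$.

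Two structural facts make the analysis work. First, $r$-connectivity of $G$ forces each component of $G \setminus R_i$ to have at least $r$ neighbors in $R_i$ whenever $c_i \ge 2$ (otherwise fewer than $r$ vertices would disconnect $G$), so the bipartite incidence graph between $R_i$ and the components has at least $rc_i$ edges, and averaging delivers a vertex in $R_i$ incident to at least $\lceil d_i \rceil$ distinct components. Second, because $E(G[R_i]) \subseteq E(G[R])$, the graph $G - E(G[R_i])$ is connected whenever $G - E(G[R])$ is; since $R_i$ is independent in this graph, any path between two components of $G \setminus R_i$ must pass through some $R_i$-vertex adjacent to both, so as long as $c_i \ge 2$ there is always a greedy choice reducing $c$ by at least $1$. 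Together, each greedy step reduces $c_i$ by at least $\max(1, \lceil d_i \rceil - 1)$.

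I would then argue by induction on $k := \lceil d \rceil$. In the base case $k = 2$ (so $d \le 2$ and $c_0 \le 2|R|/r$), the weaker bound ``$c$ drops by $\ge 1$'' already suffices: the algorithm terminates in at most $c_0 - 1$ steps, leaving $|R'| \ge |R| - c_0 + 1 \ge |R|(1 - d/r) + 1 > \epsilon_r(d)|R|$. For the inductive step with $k \ge 3$, the stronger bound ``$c$ drops by $\ge k-1$'' applies throughout the phase $d_i \in (k-1, k]$, yielding the recursion $c_{i+1} \le c_i - (k-1)$ and $n_{i+1} = n_i - 1$. A direct calculation then shows that after at most $j := \lceil (d - k + 1)|R| / ((r-1)(k-1)) \rceil$ steps we enter the regime $d_j \le k-1$, at which point the inductive hypothesis applied to $(R_j, G)$ with parameter $k-1$ gives $|R'| \ge \epsilon_r(k-1)(|R| - j)$. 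Substituting the bound on $j$ and using the recursive definition $\epsilon_r(d) = \epsilon_r(k-1)\bigl(1 - (d - k + 1)/((r-1)(k-1))\bigr)$ closes the induction.

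The main technical obstacle is verifying that $d_i$ really is non-increasing throughout each phase $d_i \in (k-1, k]$, since otherwise the per-step drop of $k-1$ would not persist. The one-step estimate $d_{i+1} - d_i \le r(c_i - (k-1)n_i)/(n_i(n_i - 1))$ shows that $d$ decreases exactly when $d_i \le r(k-1)$, which is guaranteed by $d_i \le k \le r(k-1)$ whenever $k \ge 2$ and $r \ge 3$. A secondary annoyance is reconciling the ceiling in $j$ with the real-valued target $\epsilon_r(d)|R|$; this is absorbed by the ``$+1$'' slack already present in the base case, which one propagates through the induction. The remaining work is routine: checking that the hypotheses (the $r$-connectivity of $G$ and the connectivity of $G - E(G[R])$) continue to apply to $(R_j, G)$ when invoking the inductive hypothesis, and confirming monotonicity of $\epsilon_r$ across integer boundaries.
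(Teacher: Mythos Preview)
Your approach is the same greedy procedure as the paper's, and your two structural facts are correct. The phase-by-phase induction on $k = \lceil d \rceil$ also mirrors the paper's analysis closely. However, there is a genuine gap in how you handle the ceiling in $j$.

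Your plan is to carry the slack $+1$ from the base case through the induction so that it absorbs the rounding at each phase transition. But this does not propagate. Concretely, the strengthened inductive hypothesis applied with parameter $k-1$ gives $|R'| \ge \epsilon_r(k-1)(|R|-j) + 1$, and substituting the bound $j < \alpha|R| + 1$ (with $\alpha = (d-k+1)/((r-1)(k-1))$) yields only
\[
|R'| \;\ge\; \epsilon_r(k-1)\bigl((1-\alpha)|R| - 1\bigr) + 1 \;=\; \epsilon_r(d)|R| + 1 - \epsilon_r(k-1).
\]
So each level of the induction erodes the slack by $\epsilon_r(k-1)$, and since $\sum_{i\ge 2}\epsilon_r(i)$ diverges (e.g.\ $\epsilon_3(i) \asymp i^{-1/2}$), the slack becomes negative already for moderate $k$ (for $r=3$ this happens at $k=7$).

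The underlying problem is that you discard information when you replace $\epsilon_r(d_j)$ by $\epsilon_r(k-1)$: the overshoot $k-1 - d_j$ at the phase boundary is exactly what compensates for the ceiling in $j$, and throwing it away creates the loss. The paper sidesteps this by introducing a continuous piecewise-linear majorant $f(t)$ whose breakpoints $\alpha_i$ are \emph{real} numbers; the induction in its Lemma~\ref{lem:t1} then incurs no rounding at all, and the discrete-to-continuous comparison is handled once at the end via a convexity step ($B_{t+1} \le f(t^*) + f'_+(t^*) \le f(t^*+1)$ where $f(t^*) = B_t$). An alternative repair within your discrete framework would be to prove directly that the quantity $\epsilon_r(d_i)\,n_i$ is nondecreasing along the greedy process; this relies on the convexity of $\epsilon_r$ and amounts to the same content as the paper's continuous argument, but it is not the routine bookkeeping you indicate.
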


Proving Theorem \ref{thm:Hypergraph} will take some work.
In order to prove the theorem, we will first define a continuous piecewise-linear function $f$, and we will prove certain properties of $f$. Then, we will show a connection between our continuous function $f$ and the discrete-time greedy process in our graph $G$ that we described above, which will ultimately allow us to obtain a lower bound for the size of our set $R'$.

We write $R_0 = |R|$. We define $f:[0,R_0) \rightarrow \mathbb R$ recursively as follows. We let $\alpha_0 = 0$ and let $f_0: \{0\} \rightarrow \frac{d R_0}{r}$ be a mapping from the single point $0$ to $\frac{d R_0}{r}$. It will be convenient to define $\alpha_{-1} = 0$. Now, for $i \in \{1, \dots, \lceil d \rceil -1\}$, 
we define 
\[ \alpha_i =
\begin{cases}
\min \left \{ \alpha_{i-1}  + \frac{rf_{i-1}(\alpha_{i-1}) - (\lceil d \rceil - i)(R_0 - r \alpha_{i-1})}{(r-1)(\lceil d \rceil - i)}, R_0 \right \} & i \in \{1, \dots, \lceil d \rceil - 2\}, \\
 R_0, & i = \lceil d \rceil - 1,
\end{cases}
\]
and 
\begin{eqnarray*}
f_i & : & [\alpha_{i-1},\alpha_i] \cap [0, R_0) \rightarrow \mathbb R \\
f_i(t) &=& f_{i-1}(\alpha_{i-1}) + (- \lceil d \rceil + i)(t - \alpha_{i-1}).
\end{eqnarray*}
Then, we let $f$ be the union of all these mappings, which gives us a piecewise-linear function $f:[0, R_0) \rightarrow \mathbb R$. We note that for each $\alpha_i < R_0$, $f(\alpha_i)$ is defined twice by both $f_i$ and $f_{i+1}$, but these two definitions agree.
Furthermore, if $a_{i-1} = R_0$ for some value of $i$, then $f_{i-1}(\alpha_{i-1})$ will be undefined in the definition of $f_i$, but this is not a concern, because in this case, $f_i$ has an empty domain, and $\alpha_i = R_0$ as well.

Before proceeding, we briefly describe the important properties of $f$. The function $f$ is a strictly decreasing piecewise-linear function. At $t = 0$, the value of $f$ is $\frac{d R_0}{r}$. At first, $f$ decreases with a slope of $-\lceil d \rceil + 1$. Then, after reaching $t = \alpha_1$, the slope of $f$ jumps up to $- \lceil d \rceil + 2$, $f$ and continues to decrease with this new slope. After reaching $t = \alpha_2$, the slope of $f(t)$ again jumps up to $-\lceil d \rceil + 3$, and $f$ continues to decrease. This process continues until reaching $t = \alpha_{\lceil d \rceil - 2}$, at which point the slope of $f$ jumps up a final time to $-1$. Afterward, $f$ decreases at a slope of $-1$ until stopping immediately before $t = R_0$. At this point it is not clear that the slope of $f$ will increase all the way to $-1$ before $t$ reaches $R_0$, but we do not actually need this property for our proof. We sketch the graph of $f$ in Figure \ref{fig:graph}.

We make the following claim about $f$.

\begin{lemma}
\label{lem:difEQ}
The differential equation
$f_+'(t) =  \min \{1 - \lceil \frac{r f(t)}{R_0 - t} \rceil, -1\}$ holds everywhere in $[0, R_0 )$.
\end{lemma}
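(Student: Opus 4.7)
The plan is to verify the stated formula on each linear piece of $f$. The function $f$ is piecewise linear with breakpoints $0 = \alpha_0 < \alpha_1 < \cdots < \alpha_{\lceil d \rceil - 1} = R_0$ and slope $i - \lceil d \rceil$ on $[\alpha_{i-1}, \alpha_i)$, so $f_+'(t) = i - \lceil d \rceil$ is constant on each such piece. Writing $g(t) \df rf(t)/(R_0 - t)$, the task reduces to checking that $\min\{1 - \lceil g(t) \rceil,\, -1\}$ also equals $i - \lceil d \rceil$ throughout $[\alpha_{i-1}, \alpha_i)$.

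The argument rests on two properties of $g$. The first, call it (P1), is that $g(0) = d$ and $g(\alpha_i) = \lceil d \rceil - i$ for each $1 \le i \le \lceil d \rceil - 2$. This is the algebraic identity underlying the definition of $\alpha_i$: setting $k \df \lceil d \rceil - i$ and using the affine form $f(t) = f(\alpha_{i-1}) - k(t - \alpha_{i-1})$ on the current piece, the equation $g(t) = k$ is linear in $t$ and is solved exactly by the recursively defined $\alpha_i$. The second property, (P2), is that $g$ is strictly decreasing on each piece $[\alpha_{i-1}, \alpha_i]$. To see this, introduce $h(t) \df f(t) - k(R_0 - t)$ so that $g'(t) = r\, h(t)/(R_0 - t)^2$. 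Along the piece we have $h'(t) = f'(t) + k = -k + k = 0$, so $h$ is constant; evaluating at $t = \alpha_i$ and using (P1) in the form $rf(\alpha_i) = k(R_0 - \alpha_i)$ yields $h(\alpha_i) = -\tfrac{r-1}{r}\, k\, (R_0 - \alpha_i) < 0$, so $g'(t) < 0$ everywhere on the piece.

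Combining (P1) and (P2), on each interval $[\alpha_{i-1}, \alpha_i)$ with $1 \le i \le \lceil d \rceil - 2$ the function $g$ decreases through the interval $(\lceil d \rceil - i,\, \lceil d \rceil - i + 1]$, so $\lceil g(t) \rceil = \lceil d \rceil - i + 1$ and $1 - \lceil g(t) \rceil = i - \lceil d \rceil \le -1$; taking the minimum with $-1$ recovers the slope $i - \lceil d \rceil = f_+'(t)$. On the final piece $[\alpha_{\lceil d \rceil - 2}, R_0)$, whose slope is $-1$, the same monotonicity computation with $k = 1$ shows that $g$ decreases from $g(\alpha_{\lceil d \rceil - 2}) = 2$, so $\lceil g(t) \rceil \le 2$ and hence $\min\{1 - \lceil g(t) \rceil,\, -1\} = -1 = f_+'(t)$ throughout. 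The small-$d$ degenerate cases in which only the final piece exists are handled by the same final-piece argument.

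The only computationally delicate step is the endpoint identity (P1); however, because $f$ is affine on each piece, the equation $g(t) = k$ collapses to a single linear equation in $t$ whose solution is precisely the recursively defined $\alpha_i$. Property (P2) is then essentially automatic, following from the key cancellation $f'(t) + k = 0$ that reflects the very design of the slopes $i - \lceil d \rceil$ on each piece.
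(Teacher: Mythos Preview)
Your proof is correct and follows essentially the same route as the paper's: both verify the identity piece by piece by introducing $g(t)=rf(t)/(R_0-t)$, showing that $g$ is strictly decreasing on each linear piece and hits the integer value $\lceil d\rceil-i$ precisely at the breakpoint $\alpha_i$, so that $\lceil g(t)\rceil=\lceil d\rceil-i+1$ on $[\alpha_{i-1},\alpha_i)$. Your organization around the two properties (P1)--(P2) and the auxiliary function $h(t)=f(t)-k(R_0-t)$ (with $h'\equiv 0$ on the piece, evaluated at an endpoint via (P1)) is a clean repackaging of the paper's inductive quotient-rule computation, but the underlying argument is the same.
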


\begin{proof}
By construction, $f$ is piecewise linear, and hence $f_+'$ is defined everywhere in $[0,R_0)$.
 We define $g(t) = \frac{r f(t)}{R_0 - t}$ so that the differential equation in the lemma states that $f'_+(t) = \min \{1 - \lceil g(t) \rceil , -1\}$. 
 
 We will show that the differential equation in the lemma statement holds by testing that it holds on every interval $[\alpha_{i-1}, \alpha_i)$.
 We will first show that for each $i \in \{0, \dots, i-2\}$, $f$ satisfies the differential equation on the interval $[\alpha_{i-1}, \alpha_i)$.
 We also show that $\lceil d \rceil - i - 1 < g(\alpha_i) \leq \lceil d \rceil - i$. We prove these two statements by induction on $i$. 
 
 When $i = 0$, the interval $[\alpha_{i-1}, \alpha_i)$ is empty, so the first statement is vacuous. Furthermore, it is easy to check that $g(\alpha_0) = g(0) = d$, which is at most $\lceil d \rceil$ and greater than $\lceil d \rceil - 1$. Thus the induction statements hold for $i = 0$.
 Now, let $1 \leq i \leq \lceil d \rceil - 2$. We first show that on the interval $[\alpha_{i-1}, \alpha_i)$, $f_+'(t) = - \lceil d \rceil + i = 1 - \lceil g(t) \rceil$, which will show that the differential equation holds. (Note that for our interval to be nonempty, we must have $\alpha_{i-1} < R_0$.)
 To this end, we show that $g(t)$ is strictly decreasing on this interval. By applying the quotient rule to $g$, we calculate that on this interval, 
 \begin{eqnarray*}
 \frac{(R_0 - t)^2}{r} \cdot g_+'(t) &=& f_+'(t)(R_0 - t) + f(t) \\
  & = &  ( R_0 - \alpha_{i-1}  ) \left ((-\lceil d \rceil + i) + \frac{1}{r} g(\alpha_{i-1}) \right ) \\
  & \leq & (-\lceil d \rceil + i)  ( R_0 - \alpha_{i-1}) \left (\frac{r-1}{r} \right ) < 0.
 \end{eqnarray*}
Hence, since $g(t)$ is strictly decreasing and $\lceil d \rceil - i  < g(\alpha_{i-1}) \leq \lceil d \rceil - i+1$, it suffices to show that if $g(t) = \lceil d \rceil - i$ for some time $t \in (\alpha_{i-1}, \alpha_i]$, then $t = \alpha_i$, since this will show that $f_+' = - \lceil d \rceil + i = 1 - \lceil g(t) \rceil$ everywhere on the interval $[\alpha_{i-1}, \alpha_i)$. To this end, we suppose that $g(t) = \lceil d \rceil - i$ for some $t \in (\alpha_{i-1}, \alpha_i]$. For such a value $t$, we must have
$$g(t) = \frac{r f(t)}{R_0 - t} = \frac{r( f_{i-1}(\alpha_{i-1}) + (-\lceil d \rceil + i)t)}{R_0 - t} = \lceil d \rceil - i.$$
Solving this equation, we see that $t = \frac{rf_{i-1}(\alpha_{i-1}) - (R_0-\alpha_{i-1})( \lceil d \rceil - i)}{(\lceil d \rceil - i)(r - 1)} =  \alpha_i$. Therefore, when $\alpha_{i-1} \leq t < \alpha_i$, $f_+'(t) = 1 - \lceil g(t) \rceil $, which proves the first induction statement. The same argument tells us that $g(\alpha_i) = \lceil d \rceil - i$, which proves the second induction statement.

Finally, when $\alpha_{\lceil d \rceil - 2} \leq t < \alpha_{\lceil d \rceil -1 }$, the same argument tells us that $g'(t)$ is decreasing, so we know that $g(t) < 2$, and therefore, the differential equation holds as long as $f_+'(t) = -1$ everywhere on $[\alpha_{\lceil d \rceil - 2}, \alpha_{\lceil d \rceil - 1})$. However, this follows from the definition of $f$ on $[\alpha_{\lceil d \rceil - 2}, \alpha_{\lceil d \rceil - 1})$. This completes the proof.
\end{proof}

Now that we know how $f_+'$ relates to $f$, we can show that the value of $f$ decreases past $1$ reasonably quickly. We will ultimately use $f$ as an upper bound for the number of components in our graph $G \setminus R$ during our greedy process, so understanding when $f$ reaches $1$ will help us estimate when we only have a single remaining component.

\begin{lemma}
\label{lem:t1}
There exists a value $ t_1 < (1 - \epsilon_r(d))R_0$ for which $f(t_1) = 1$. 
\end{lemma}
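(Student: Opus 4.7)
My plan is to track $f$ through its final linear segment and show that $f$ attains the value $1$ shortly before time $(1-\epsilon_r(d))R_0$. Set $k = \lceil d \rceil$. By construction, $f$ is linear with slope $-1$ on the last interval $[\alpha_{k-2}, R_0)$ (when $k \ge 2$), so it suffices to pin down both $\alpha_{k-2}$ and $f(\alpha_{k-2})$ in closed form and then solve $f(t_1)=1$ directly on that segment.

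To locate $\alpha_{k-2}$, I will combine two facts about $f$. From the proof of Lemma~\ref{lem:difEQ}, $g(\alpha_i) = k-i$ for each $i \ge 1$, which rewrites as $f(\alpha_i) = \frac{(k-i)(R_0 - \alpha_i)}{r}$. Since $f$ has slope $-(k-i)$ on $[\alpha_{i-1},\alpha_i]$, I also have $f(\alpha_i) = f(\alpha_{i-1}) - (k-i)(\alpha_i - \alpha_{i-1})$. Introducing the normalized variable $\beta_i := (R_0-\alpha_i)/R_0$ and substituting yields, after one line of algebra, the one-step recursion
\[
\beta_i = \Bigl(1 - \tfrac{1}{(r-1)(k-i)}\Bigr)\beta_{i-1} \qquad (2 \le i \le k-2),
\]
together with the initial value $\beta_1 = 1 - \tfrac{1+d-k}{(r-1)(k-1)}$, which comes from the same calculation applied at $i=1$ using $f(0)=dR_0/r$ (note that $g(\alpha_0)=d$, not $k$, which is exactly what produces the extra $d-k+1$ factor).

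Iterating the recursion gives $\beta_{k-2} = \beta_1 \prod_{j=2}^{k-2}\bigl(1 - \tfrac{1}{(r-1)j}\bigr)$. The product on the right matches exactly the expansion used in the proof of Proposition~\ref{rem:sqrt}, namely $\prod_{j=2}^{k-2}\bigl(1 - \tfrac{1}{(r-1)j}\bigr) = \tfrac{r}{r-2}\,\epsilon_r(k-1)$, while the recursive definition of $\epsilon_r$ gives $\epsilon_r(d) = \beta_1 \cdot \epsilon_r(k-1)$. Combining these three equalities collapses everything to the clean identity $\beta_{k-2} = \tfrac{r}{r-2}\,\epsilon_r(d)$, and therefore $f(\alpha_{k-2}) = \tfrac{2}{r}\beta_{k-2}R_0 = \tfrac{2}{r-2}\epsilon_r(d)R_0$.

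Finally, since $f$ has slope $-1$ on $[\alpha_{k-2}, R_0)$, the value $1$ is attained at
\[
t_1 = \alpha_{k-2} + f(\alpha_{k-2}) - 1 = R_0\Bigl(1 - \tfrac{r}{r-2}\epsilon_r(d) + \tfrac{2}{r-2}\epsilon_r(d)\Bigr) - 1 = (1-\epsilon_r(d))R_0 - 1,
\]
which is strictly below $(1-\epsilon_r(d))R_0$. Membership $t_1 \in [\alpha_{k-2}, R_0)$ reduces to $\tfrac{r}{r-2} > 1$, which holds for $r \ge 3$. The degenerate case $k \le 2$ (that is, $d \le 2$) is handled by the same slope-$-1$ calculation applied directly from $t=0$, since $f$ then consists of a single linear piece. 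I expect the main obstacle to be the index bookkeeping needed to match the telescoping product for $\beta_{k-2}$ against the recursive definition of $\epsilon_r(d)$; once that identification is made, the claim becomes a one-line slope computation.
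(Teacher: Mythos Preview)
Your argument is correct and takes a genuinely different route from the paper's. The paper proves the lemma by induction on $\lceil d\rceil$: in the base case $\lceil d\rceil\le 2$ it solves the single linear piece directly, and in the inductive step it shifts the origin to $\alpha_1$, observes that $\tilde f(t)=f(t+\alpha_1)$ is a copy of the same construction with $R_0$ replaced by $\tilde R_0=R_0-\alpha_1$ and $d$ replaced by $\lceil d\rceil-1$, and then invokes the recursive definition of $\epsilon_r$ to conclude. Your approach instead unwinds the recursion completely: you compute $\beta_i=(R_0-\alpha_i)/R_0$ in closed form via the telescoping product and identify the product with $\frac{r}{r-2}\epsilon_r(d)$, obtaining the exact equality $t_1=(1-\epsilon_r(d))R_0-1$ rather than just the inequality. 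A side benefit of your computation is that it makes explicit that each $\beta_i>0$ for $i\le k-2$, i.e.\ that the $\min$ in the definition of $\alpha_i$ never activates---a point the paper's text flags as ``not clear'' but does not actually need.

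One small slip: your claim that ``membership $t_1\in[\alpha_{k-2},R_0)$ reduces to $\tfrac{r}{r-2}>1$'' is not quite right. The condition $t_1\ge\alpha_{k-2}$ is equivalent to $f(\alpha_{k-2})\ge 1$, i.e.\ $\tfrac{2}{r-2}\epsilon_r(d)R_0\ge 1$, which can fail for small $R_0$ or large $d$. This does not damage the conclusion, however: if $f(\alpha_{k-2})<1$, then $f$ already attains the value $1$ at some $t_1<\alpha_{k-2}$, and since $\alpha_{k-2}=\bigl(1-\tfrac{r}{r-2}\epsilon_r(d)\bigr)R_0<(1-\epsilon_r(d))R_0$ (here is where $\tfrac{r}{r-2}>1$ is used), the lemma follows in that case as well. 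You should add this one sentence.
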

\begin{proof}
We observe that our condition on $t_1$ may be equivalently stated as $R_0 - t_1 > \epsilon_r(d) R_0$.

We induct on $\lceil d \rceil$. When $\lceil d \rceil \leq 2$, we see from Lemma \ref{lem:difEQ} that $f_+'(t) = -1$ everywhere, so $f(t) = \frac{d R_0}{r} - t$. Solving $f(t_1) = 1$, we see that $t_1 = \frac{d R_0}{r} - 1 < (1 - \epsilon_r(d))R_0$.

Now, suppose that $\lceil d \rceil \geq 3$. If $f(\alpha_1) < 1$, then the previous argument shows that $t_1 < (1 - \epsilon_r(2))R_0 < (1 - \epsilon_r(d))R_0$. Otherwise, $f(\alpha_1) \geq 1$. We define $\Tilde f(t) = f(t + \alpha_1)$ and restrict $\Tilde f$ to $t  \geq 0$, and we observe that $\Tilde f (0) = R_0 - \alpha_1 := \Tilde{R_0}$. Then, by the induction hypothesis, there exists a value $\Tilde t_1$ for which $\Tilde f(\Tilde t_1) = 1$ and such that \[R_0 - t_1 =  \Tilde R_0 - \Tilde t_1 >  \epsilon_{r}(\lceil d \rceil -1) \Tilde{R_0}  = \epsilon_{r}(\lceil d \rceil -1) \left (1 - \frac{1 + d - \lceil d \rceil }{(r-1)(\lceil d \rceil -1)} \right )R_0 = \epsilon_r(d) R_0.\]
This completes the proof.
\end{proof}

\begin{proof}[Proof of Theorem \ref{thm:Hypergraph}]
We will imagine that the vertices in $R$ are colored red and that all other vertices of $G$ are colored blue, and we refer to the set of connected components of $G\setminus R$ as $B$.
We claim that by using the following greedy process, we obtain a graph with a single blue component and a red vertex set $R' \subseteq R$, such that $|R'|>\varepsilon_r(d) |R|$.
\begin{enumerate}
    \item[($\star$)] While 
    the number of blue components is at least two,
    choose a red vertex $v$ adjacent to the greatest number of distinct blue components, and color $v$ blue.
\end{enumerate}
Since $G$ is at least $3$-connected, coloring all red vertices in $G$ blue yields one blue component; therefore, there exists some finite integer value $t_1$ such that $(\star)$ iterates exactly $t_1$ times and then terminates. For an integer value $0 \leq t \leq t_1$, we define $R_t$ and $B_t$ as the number of red vertices and blue components, respectively, in our graph after $t$ iterations of $(\star)$. In particular, $R_0 = |R|$, and we observe that $R_t= R_0 - t$ for all integer values of $0 \leq t \leq t_1$.
Using $r$, $d$, and $R_0$, we define a function $f(t)$ as in the proof of Theorem \ref{thm:Hypergraph}.
We claim that for all integers $0 \leq t \leq t_1$, after $t$ iterations of $(\star)$,
$B_t \leq f(t)$, and we prove this claim by induction on $t$. %
When $t = 0$, $B_0 \leq \frac{d R_0}{r} = f(0)$.

Now, suppose that the claim holds up to some integer value $t < t_1$; we prove that the claim holds for $t + 1$. Suppose that after $t$ iterations of $(\star)$, $B_t = \frac{k}{r}R_t$ for some real number $k$. Since $G$ is $r$-connected, each current blue component in our graph must be adjacent to at least $r$ red vertices, for a total of at least $B_t r$ 
adjacencies between blue components and red vertices. Therefore, by the pigeonhole principle, there must exist a red vertex adjacent to at least $\lceil \frac{B_t r}{R_t} \rceil = \lceil k \rceil$ distinct blue components,
and therefore, during iteration number $t+1$ of $(\star)$, the number of blue components in our graph decreases by at least $\lceil k \rceil - 1$.
Furthermore, since $G$ is connected even after removing all edges with both endpoints in $R$, we always have a red vertex adjacent to at least two distinct blue components, so $(\star)$ always reduces the number of blue components by at least one.
Therefore, during iteration number $t+1$ of $(\star)$, the number of blue components will decrease by at least $\max \{ \lceil k \rceil -1,1\} =  \max \{ \lceil \frac{rB_t}{R_0 - t} \rceil - 1, 1 \}$.

Now, since $B_t \leq f(t)$, and since $f$ continuously decreases past $1$, there must exist a real value $t^* \geq t$ % 
for which $f(t^*) = B_t$.
We claim that $t^*+1 < R_0$. Indeed, by Lemma \ref{lem:t1}, there exists a value $t^*_1$ for which $f(t_1^*) = 1$, and $t_1^* \leq R_0 - 1$. Since $B_t = f(t^*) > 1$, it follows that $t^* < R_0 - 1$, and so $t^* + 1 < R_0$, and so $t^* + 1$ in the domain of $f$.

\begin{figure}
\begin{tikzpicture}
[scale=0.15,yscale=0.25,auto=left,every node/.style={circle,fill=black,minimum size = 0pt,inner sep=1.5pt}]

\node (rm2) at (2.5,2) [draw = white, fill = white] {};

\node (a1) at (34,85) [draw=white,fill=white] {$f(t^*)$};
\node (a1) at (48,60) [draw=white,fill=white] {$f(t^*) + f_+'(t^*)$};
\node (a1) at (25,45) [draw=white,fill=white] {$B_{t+1}$};
\node (a1) at (16,80) [draw=white,fill=white] {$B_{t}$};

\node (a1) at (32,-10) [draw=white,fill=white] { ${\scriptstyle t+1}$};
\node (a1) at (40,-9) [draw=white,fill=white] {${ \scriptstyle t^*+1}$};
\node (a1) at (29,-9) [draw=white,fill=white] {${ \scriptstyle t^*}$};
\node (a1) at (20,-10) [draw=white,fill=white] {${ \scriptstyle t}$};

%axes
\draw (0,0) -- (100,0);
\draw (0,0) -- (0,200);

%piecewise linear graph
\draw (0,200) -- (10,150);
\draw (10,150) -- (21.25,105);
\draw (21.25,105) -- (34.375,65.625);
\draw  (34.375,65.625) -- (50.78175,32.8125);
\draw  (50.78175,32.8125) -- (100,32.8125 - 50.78175);

%segments at corners
\draw (10,155) -- (10, 145);
\draw (21.25,105+5) -- (21.25,105-5);
\draw (34.375,65.625+5) -- (34.375,65.625 - 5);
\draw (50.78175,32.8125 + 5) -- (50.78175,32.8125 - 5);
\draw(100,5) -- (100,-20);

%labels on axis

\node (a1) at (10,-30) [draw=white,fill=white] {$\alpha_1$};
\node (a2) at (21.25,-30) [draw=white,fill=white] {$\alpha_2$};
\node (a3) at (34.375,-30) [draw=white,fill=white] {$\alpha_3$};
\node (a4) at (50.78175,-30) [draw=white,fill=white] {$\alpha_4$};
\node (a5) at (100,-30) [draw=white,fill=white] {$\alpha_5$};
\node (t) at (105,-5) [draw=white,fill=white] {$t$};
\node (y1) at (-3.75,10) [draw=white,fill=white] {$1$};
%\node (ft) at (-10,190) [draw=white,fill=white] {$f$};

%segments on axes

\draw (10,5) -- (10, -20);
\draw (21.25,5) -- (21.25,-20);
\draw (34.375,5) -- (34.375,- 20);
\draw (50.78175,5) -- (50.78175,-20);
\draw (-1.25,10) -- (1.25,10);

\draw[dashed] (0,200) -- (20,80);
\draw[dashed] (20,80) -- (30,50);
\draw[dashed] (30,50) -- (40,30);
\draw[dashed] (40,30) -- (60,10);
\node (a1) at (20,80) [draw=black,fill=black] {};
\node (a2) at (30,50) [draw=black,fill=black] {};

\node (b1) at (25/3+21.25,80) [draw=black,fill=black] {};
\node (b2) at (25/3+21.25+10,80-30) [draw=black,fill=black] {};

\draw[densely dotted] (a1) -- (20,0) ;
\draw[densely dotted] (a2) -- (30,0) ;
\draw[densely dotted] (b1) -- (25/3+21.25,0) ;
\draw[densely dotted] (b2) -- (25/3+21.25+10,0) ;

%t's on x axis
\draw (20,5) -- (20, -5);
\draw (30,5) -- (30, -5);
\draw (25/3+21.25, 5) -- (25/3+21.25, -5); 
\draw (25/3+21.25+10, 5) -- (25/3+21.25+10, -5);

\draw[densely dotted] (25/3+21.25,80) -- (25/3+21.25+10,80-30) ;

\draw[densely dotted] (0,10) -- (100,10);
\foreach \from/\to in {}
    \draw (\from) -- (\to);

\end{tikzpicture}
\caption{The figure shows an example of the graphs of the continuous function $f(t)$ and the discrete-time function $B_t$ on the $y$-axis, with the parameter $t$ on the $x$-axis. Values of $f(t)$ are shown with a solid curve, and values of $B_t$ are shown with a dashed curve, with non-integer values filled in. The four labelled points on the curves illustrate the estimate $B_{t+1} \leq B_t + f_+'(t^*) = f(t^*) + f_+'(t^*) \leq f(t^* + 1)$ in Theorem \ref{thm:Hypergraph}.
}
\label{fig:graph}
\end{figure}
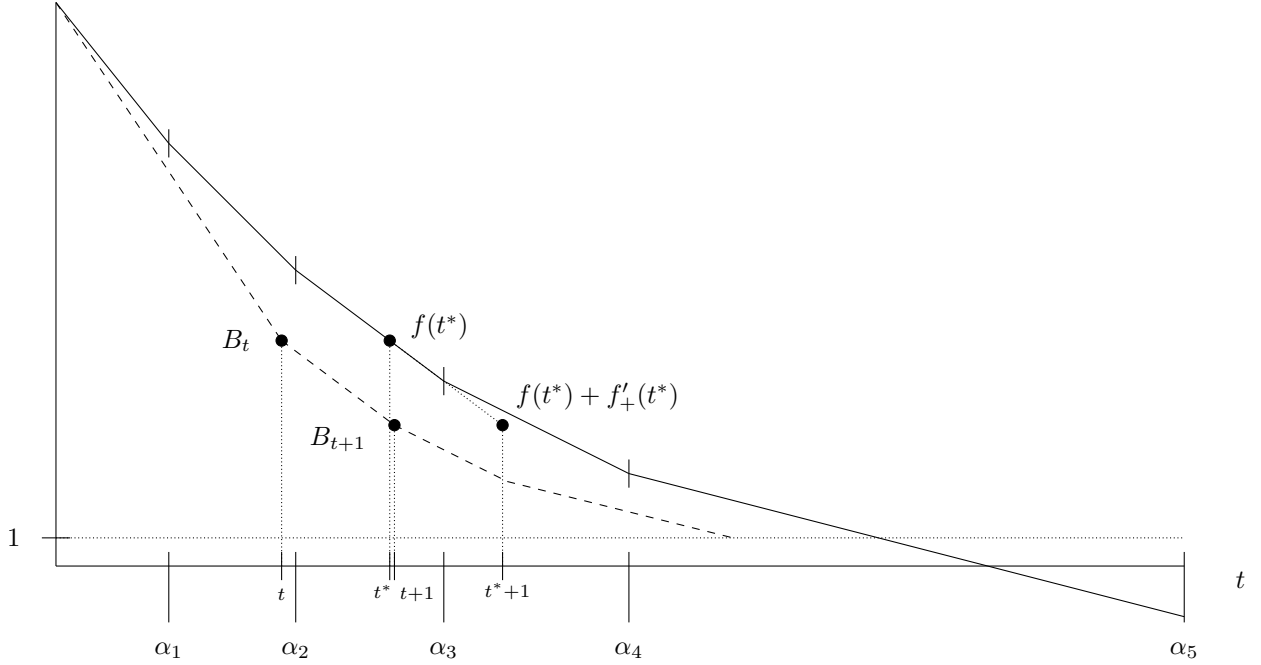

Now, by Lemma \ref{lem:difEQ}, the differential equation $f'_+(t^*) =   \min \{1 - \lceil \frac{rf(t^*)}{R_0 - t} \rceil, -1 \}$ holds. 
Note that here, $f_+'(t^*)$ is the opposite of the quantity $\max \{\lceil \frac{rB_t}{R_0 - t}  \rceil - 1, 1\}$ from the previous paragraph.
Therefore, combining this differential equation with our previous observation about the decrease in the number of blue components, we see that 
\begin{equation}
    \label{estimateB}
    B_{t+1} \leq B_t + f'_+(t^*) = f(t^*) + f'_+(t^*) \leq f(t^* + 1),
\end{equation}
where the last inequality follows from the fact that $f_+'$ is weakly increasing. This completes induction. We illustrate the estimate (\ref{estimateB}) in Figure \ref{fig:graph}.

Now, since $f$ is decreasing and $B_t \leq f(t)$ for integer values $t \leq t_1$, it must follow that $t_1 \leq t^*_1$. Then, by Lemma \ref{lem:t1}, $R_0 - t_1 > R_0 - t_1^* > \epsilon_r(d) R_0$. Therefore, after $t_1$ iterations of $(\star)$, the number of blue components in our graph is one, and the number of red vertices remaining in our graph is at least $\epsilon_r(d) R_0$. We let this remaining set of red vertices make up our set $R'$, which completes the proof.
\end{proof}

Now that the proof of Theorem \ref{thm:Hypergraph} is complete, we are ready to estimate the robust connectivity of $r$-connected graphs of bounded Euler genus.
First, we establish an easy observation that will be useful to estimate some corner cases. Recall that for a graph $G$, $\ell(G,R)$ is the maximum value $k$ for which there exists a spanning tree $T$ on $G$ satisfying $|\Lambda(T) \cap R| = k|R|$.
\begin{observation}\label{obs:smallGraphs}
  If $G$ is an $r$-connected graph and $R\subseteq V(G)$, then $\ell(G,R)\ge \min(\frac{r-1}{|R|},1)$. 
\end{observation}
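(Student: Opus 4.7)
The plan is to exhibit, for any $r$-connected graph $G$ and nonempty $R \subseteq V(G)$, a single spanning tree $T$ of $G$ satisfying $|R \cap \Lambda(T)| \geq \min\{|R|,\,r-1\}$; dividing by $|R|$ then yields the bound on $\ell(G,R)$, since $\min\{\frac{r-1}{|R|},\,1\} = \frac{\min\{|R|,\,r-1\}}{|R|}$. The key idea is that one can reserve a small prospective leaf set $R' \subseteq R$ of size at most $r-1$, because the $r$-connectivity of $G$ both keeps the rest connected and guarantees that every vertex of $R'$ has enough neighbors outside $R'$ to be attached as a leaf.

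Concretely, I would set $k = \min\{|R|,\,r-1\}$ and pick any $R' \subseteq R$ with $|R'| = k$. Since $|R'| \leq r-1$ and $G$ is $r$-connected, $G \setminus R'$ is connected, so I can take any spanning tree $T_0$ of $G \setminus R'$. For each $v \in R'$, the $r$-connectivity of $G$ forces $\deg_G(v) \geq r$, while $v$ has at most $|R'| - 1 \leq r-2$ neighbors inside $R'$; hence $v$ has at least $r - (r-2) = 2$ neighbors in $V(G) \setminus R' = V(T_0)$. I then form $T$ by augmenting $T_0$ with one edge from each $v \in R'$ to such a neighbor. Every vertex of $R'$ is a leaf of $T$, giving $|R \cap \Lambda(T)| \geq k$, which is exactly what is needed.

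I do not anticipate any real obstacle here: the observation is essentially a direct consequence of $r$-connectivity combined with a pigeonhole count on $N(v)$ for $v \in R'$. The only minor subtlety is handling the two regimes $|R| \leq r-1$ (where the minimum equals $1$ and we can try to leaf-ify all of $R$) and $|R| \geq r$ (where we can only aim for $r-1$ leaves in $R$) uniformly, which is taken care of by the single choice $k = \min\{|R|,\,r-1\}$.
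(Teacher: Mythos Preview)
Your argument is correct and is exactly the standard justification for this observation; the paper itself states it without proof, and your approach---remove a set $R' \subseteq R$ of size $\min\{|R|,r-1\}$, take a spanning tree of the still-connected $G \setminus R'$, and reattach each vertex of $R'$ as a leaf using the minimum-degree bound---is the intended one-line reason.
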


An easy consequence of Observation~\ref{obs:smallGraphs} is that for an $r$-connected graph on $n$ vertices, $\kappa_\rho(G)\ge \frac{r-1}{n}$.
We will also use the following lemma of Goddard, Plummer, and Swart~\cite{Goddard} about the toughness of graphs of bounded genus in terms of connectivity. While Goddard et al.~originally prove a slightly different lemma for orientable genus, the exact same proof may be used to obtain the following result for Euler genus.
\begin{lemma}[\cite{Goddard}]
\label{lem:Schmeichel}
If $G$ is an $r$-connected graph of Euler genus $\gamma$, and if $X \subseteq V(G)$, then the number of components of $G \setminus X$ is at most $\frac{2}{r - 2}(|X| - 2 + \gamma)$.
\end{lemma}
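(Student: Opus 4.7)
The plan is to pass from $G$ to a bipartite minor whose Euler genus is still bounded by $\gamma$, and then to apply the standard face-counting inequality for bipartite surface graphs.

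First, I would let $C_1,\dots,C_k$ denote the components of $G\setminus X$ and form the simple bipartite graph $H$ with parts $X$ and $\{C_1,\dots,C_k\}$, placing an edge from $x\in X$ to $C_i$ exactly when $x$ has a neighbor in $C_i$. The graph $H$ is a minor of $G$: contract each $C_i$ to a single vertex in the embedding, delete the edges with both endpoints in $X$, and identify any parallel edges that arise. Since contractions of connected subgraphs and edge deletions preserve embeddability on the same surface, $H$ embeds in a surface of Euler genus at most $\gamma$.

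Next, I would exploit the $r$-connectivity of $G$ to lower-bound $|E(H)|$. For each component $C_i$, the neighborhood $N(C_i)\cap X$ must contain at least $r$ distinct vertices of $X$; otherwise $N(C_i)\cap X$ would be a cut set of size less than $r$ separating $C_i$ from the rest of $G$. Hence each $C_i$ has degree at least $r$ in $H$, and summing over the $k$ component vertices gives $|E(H)|\ge rk$.

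Finally, I would apply the bipartite Euler-genus bound: a simple bipartite graph on $n$ vertices embedded in a surface of Euler genus $\gamma$ has at most $2(n-2+\gamma)$ edges, because every face has length at least $4$, so $2|E|\ge 4|F|$ combined with $|V|-|E|+|F|\ge 2-\gamma$. Taking $n=|X|+k$, this yields
\[
rk \;\le\; |E(H)| \;\le\; 2\bigl(|X|+k-2+\gamma\bigr),
\]
which rearranges to $(r-2)k \le 2(|X|-2+\gamma)$, the desired inequality. The only subtlety is verifying that $H$ is genuinely embedded in the same surface, rather than merely a minor of $G$ as an abstract graph, but this follows from the fact that each contracted $C_i$ is a connected subset of the embedded graph. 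The argument is insensitive to orientability, which is precisely why the orientable-genus proof of Goddard, Plummer, and Swart transfers verbatim to Euler genus.
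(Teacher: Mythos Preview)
The paper does not supply its own proof of this lemma; it is quoted from Goddard, Plummer, and Swart with the remark that their orientable-genus argument carries over verbatim to Euler genus. Your argument is correct and is precisely the standard one (and almost certainly the one in the cited paper): contract each component of $G\setminus X$ to a single vertex to obtain a simple bipartite minor $H$, use $r$-connectivity to force every component-vertex to have degree at least $r$, and then bound $|E(H)|$ via the bipartite surface inequality $|E|\le 2(|V|-2+\gamma)$.

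One small point worth flagging: the step ``$|N(C_i)\cap X|\ge r$, else this set separates $C_i$ from the rest of $G$'' tacitly assumes there \emph{is} something on the other side of the cut, which holds whenever $k\ge 2$. When $k\le 1$ the stated inequality can actually fail for very small $|X|$ (e.g.\ $X=\emptyset$, $\gamma=0$ gives a negative right-hand side), so the lemma as written already has degenerate edge cases; your proof inherits them, but so does the original, and in every application in the paper $|X|$ is large enough that this is harmless. You might also note that the bipartite edge bound needs $H$ to have at least three vertices (or a cycle) for the face-counting step $2|E|\ge 4|F|$ to be meaningful, but again this is automatic once $k\ge 2$.
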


\begin{theorem}
    \label{thm:k_genus}
 For $r \geq 3$, if $G$ is an $r$-connected graph of Euler genus $\gamma \geq 0$, then $\kappa_\rho(G) \geq \frac{1}{27}(\gamma+1)^{-1/r}$. Furthermore, when $\gamma \leq 2$,
 \begin{itemize}
     \item If $r = 3$, then $\kappa_\rho(G) \geq \frac{21}{256}$.
     \item If $r = 4$, then $\kappa_\rho(G) \geq \frac{5}{27}$.
     \item If $r = 5$, then $\kappa_\rho(G) \geq \frac{49}{192}$.
     \item If $r = 6$, then $\kappa_\rho(G) \geq \frac{3}{10}$.
     \end{itemize}
\end{theorem}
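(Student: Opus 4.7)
The plan is to fix an arbitrary nonempty $R\subseteq V(G)$ and prove $\ell(G,R)\ge \frac{1}{27}(\gamma+1)^{-1/r}$, together with the sharper constants when $\gamma\le 2$. The strategy is the usual two-step tree construction: locate a subset $R'\subseteq R$ such that (a) $G\setminus R'$ is connected and (b) every vertex of $R'$ has a neighbor in $V(G)\setminus R'$; then any spanning tree of $G\setminus R'$ can be extended by attaching each $v\in R'$ as a leaf, yielding $\ell(G,R)\ge|R'|/|R|$.

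I split on the size of $R$ at a threshold $\tau$ to be optimized. If $|R|\le\tau$, Observation~\ref{obs:smallGraphs} already gives $\ell(G,R)\ge (r-1)/|R|\ge(r-1)/\tau$. Otherwise $|R|>\tau$, and Lemma~\ref{lem:Schmeichel} bounds the number of components of $G\setminus R$ by $\frac{2}{r-2}(|R|-2+\gamma)=\frac{d}{r}|R|$ with $d=\frac{2r}{r-2}\bigl(1-\frac{2-\gamma}{|R|}\bigr)$. Provided $R$ satisfies the hypothesis of Theorem~\ref{thm:Hypergraph}---equivalently, every vertex of $R$ has at least one neighbor in $V(G)\setminus R$ and the corresponding graph remains connected after deleting the $R$-internal edges---Theorem~\ref{thm:Hypergraph} delivers $R'\subseteq R$ with $|R'|\ge\epsilon_r(d)|R|$ and $G\setminus R'$ connected; the same hypothesis gives condition (b) for free, since every vertex of $R\supseteq R'$ has a neighbor in $V(G)\setminus R\subseteq V(G)\setminus R'$.

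For the asymptotic bound I would apply Proposition~\ref{rem:sqrt}: $\epsilon_r(d)\ge\frac{r-2}{\sqrt{e}\,r}(d-2)^{-1/(r-1)}$. A direct computation shows $d-2=\Theta(\gamma/|R|)$ in the regime $|R|\lesssim\gamma$, so $\epsilon_r(d)=\Omega\bigl((|R|/\gamma)^{1/(r-1)}\bigr)$ there and $\Omega(1)$ when $|R|\gtrsim\gamma$. Balancing $(r-1)/\tau$ against $\epsilon_r(d)$ at $|R|=\tau$ yields $\tau\asymp\gamma^{1/r}$, and at this point both estimates evaluate to $\Theta(\gamma^{-1/r})$; tracking constants, and replacing $\gamma$ by $\gamma+1$ to handle the planar case uniformly, produces the factor $\tfrac{1}{27}$. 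For the sharpened bounds at $\gamma\le 2$ and $r\in\{3,4,5,6\}$, I would bypass Proposition~\ref{rem:sqrt} and evaluate $\epsilon_r(d)$ directly from its recursive definition at the fixed value $d=\frac{2r}{r-2}$ (namely $d=6,4,\frac{10}{3},3$), which gives $\epsilon_r(d)=\frac{21}{128},\frac{10}{27},\frac{49}{96},\frac{3}{5}$; then optimize $\tau$ over small integer values and combine with the loss discussed below to reach the closed forms $\frac{21}{256},\frac{5}{27},\frac{49}{192},\frac{3}{10}$.

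The main obstacle is guaranteeing the hypothesis of Theorem~\ref{thm:Hypergraph} for an arbitrary $R$. When $R$ contains ``interior'' vertices (those $v$ with $N(v)\subseteq R$), $G$ minus its $R$-internal edges has such vertices as isolated points and the theorem does not directly apply. The natural repair is to pass from $R$ to its boundary $R_b=\{v\in R : N(v)\not\subseteq R\}$, apply the theorem to $R_b$, and then estimate $|R_b|/|R|$ from below. That each sharp constant in the statement is exactly half of the corresponding $\epsilon_r(d)$---for instance $\tfrac{21}{256}=\tfrac{1}{2}\cdot\tfrac{21}{128}$, $\tfrac{5}{27}=\tfrac{1}{2}\cdot\tfrac{10}{27}$, and so on---is a strong hint that the correct quantitative step loses precisely a factor of two; producing such a bound $|R_b|/|R|\ge \tfrac{1}{2}$ (presumably via a separate argument exploiting $r$-connectivity together with the genus bound of Lemma~\ref{lem:Schmeichel} applied to $G[R]$) is the most delicate part of the proof, and where I would expect the bulk of the technical work to lie.
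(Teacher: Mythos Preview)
Your overall architecture matches the paper's: fix $R$, split on $|R|$, invoke Observation~\ref{obs:smallGraphs} for small $R$ and Lemma~\ref{lem:Schmeichel} together with Theorem~\ref{thm:Hypergraph} for large $R$, then balance. You also correctly identify the only real obstacle---guaranteeing the connectivity hypothesis of Theorem~\ref{thm:Hypergraph}---and you correctly detect from the constants that the repair should cost exactly a factor of~$2$.

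However, your proposed repair via the boundary set $R_b=\{v\in R: N(v)\not\subseteq R\}$ does not work: the inequality $|R_b|/|R|\ge\tfrac12$ is simply false. Take $R=V(G)$; then $R_b=\emptyset$. More generally $|R_b|/|R|$ can be made arbitrarily small, so no argument via Lemma~\ref{lem:Schmeichel} on $G[R]$ will rescue it. The paper's fix is a one-line trick that sidesteps the issue entirely: pick any spanning tree $T$ of $G$, let $U,W$ be its two colour classes, and set $R'$ to be the larger of $R\cap U$ and $R\cap W$. Then $|R'|\ge\tfrac12|R|$ by pigeonhole; since $R'$ is independent in $T$, deleting all edges with both endpoints in $R'$ leaves $T$ intact, so the hypothesis of Theorem~\ref{thm:Hypergraph} holds for $R'$; and every vertex of $R'$ has its $T$-neighbours in the other colour class, hence outside $R'$, which gives condition~(b). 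This is where the clean factor $\tfrac12$ comes from.

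Two minor remarks. First, for $\gamma\le 2$ no threshold $\tau$ is needed at all: Lemma~\ref{lem:Schmeichel} gives $|B|\le\frac{2}{r-2}(|R'|-2+\gamma)\le\frac{2}{r-2}|R'|$ outright, so one applies Theorem~\ref{thm:Hypergraph} with $d=\frac{2r}{r-2}$ directly to $R'$ and reads off $\ell(G,R)\ge\tfrac12\epsilon_r(d)$, exactly the four constants you computed. Second, in the general case the paper thresholds on $|R'|$ (not $|R|$) against $c(\gamma+1)^{1/r}$ with $c=12(r-1)$; the rest of your asymptotic analysis is on the right track.
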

\begin{proof}
  The first inequality is implied by the last four inequalities when $\gamma \leq 2$; therefore, for the proof of the first inequality, we assume that $\gamma \geq 3$. 
We fix a subset $R \subseteq V(G)$, and we aim to show that there exists a spanning tree in $G$ that uses at least $\frac{1}{27}(\gamma+1)^{-1/3}|R|$ vertices from $R$ as leaves.

First, we choose an arbitrary spanning tree $T$ of $G$, and we let $R' \subseteq R$ be the larger of $R \cap U$ and $R \cap W$, where $U$ and $W$ are the two color classes in the bipartition of $T$. 
Clearly, $|R'| \geq \frac{1}{2} |R|$. Furthermore,
$G$ is connected even after all edges with both endpoints in $R'$ are removed. From now on, we will only work with $R'$, and we will ignore all other vertices in $R$.

We set $c = 12 (r - 1) $.
If $|R'|\le c (\gamma+1)^{1/r}$ then we conclude the case by Observation~\ref{obs:smallGraphs}, as in this case we manage to get $\ell(G,R')\ge \frac{r - 1}{c (\gamma+1)^{1/r}}$, and hence $\ell(G,R)\ge \frac{r - 1}{2c}(\gamma+1)^{-1/r} > \frac{1}{27} (\gamma+1)^{-1/r}$.
Otherwise, we color the vertices of $R'$ red, and we color the vertices of each component of $G \setminus R'$ blue.
We give the name $B$ to the set of blue components in $G$.
By Lemma \ref{lem:Schmeichel}, 
\begin{equation}
\label{eqnB}
|B| \leq \frac{2}{r - 2}(|R'| - 2 +  \gamma) = \frac{2}{r - 2} \left (1 + \frac{ (\gamma+1) - 3}{|R'|} \right ) |R'| <  \frac{2}{r - 2} \left ( \frac{1}{c}  (\gamma+1)^{\frac{r-1}{r}}+1 \right )|R'|.
\end{equation}
We observe that $\frac{1}{c} (\gamma+1)^{\frac{r-1}{r}} = \frac{1}{12 (r - 1)}(\gamma+1)^{\frac{r-1}{r}} $ can be bounded below as follows. It follows from Euler's formula that a graph of Euler genus $\gamma$ has a vertex of degree at most $\delta \leq \left \lfloor \frac{5 + \sqrt{24 \gamma + 1}}{2} \right \rfloor$, so $r - 1 \leq \left \lfloor \frac{3 + \sqrt{24 \gamma + 1}}{2} \right \rfloor$. Therefore, 
\[\frac{1}{c} (\gamma+1)^{\frac{r-1}{r}} = \frac{1}{12 (r - 1)}(\gamma+1)^{\frac{r-1}{r}} \geq \frac{1}{12 \cdot \left \lfloor \frac{3 + \sqrt{24 \gamma + 1}}{2} \right \rfloor} (\gamma + 1)^{\frac{r-1}{r}} > \frac{47}{1200} \]
for integer values $\gamma \geq 3$. Therefore, $\frac{1}{c} (\gamma+1)^{\frac{r-1}{r}} + 1 < \frac{1247}{47} \cdot\frac{1}{c} (\gamma+1)^{\frac{r-1}{r}} $, and hence (\ref{eqnB}) gives us
\[ |B| < \frac{2}{(r -2)(r-1) } \cdot \frac{1247}{564} (\gamma +1)^{\frac{r-1}{r}}. \]
Then, by applying Theorem \ref{thm:Hypergraph} with $d = \left \lceil  \frac{2 r}{(r -2)(r-1) } \cdot \frac{1247}{564} (\gamma +1)^{\frac{r-1}{r}} \right \rceil$ and using Remark \ref{rem:sqrt}, 
\[\ell(G,R') > \frac{r-2}{\sqrt{e} r}   \left ( \frac{(r - 2)(r-1)}{2 r} \cdot \frac{564}{1247} \right )^{\frac{1}{r-1}} (\gamma+1)^{-1/r}.\] 
When $r \geq 3$, $(\frac{r - 2}{r})^{\frac{r}{r-1}}(r-1)^{\frac{1}{r -1}} > \frac{27}{100}$, and so \[\ell(G,R') > \frac{27}{100\sqrt{e}} \left ( \frac{564}{2494} \right )^{\frac{1}{r - 1}} (\gamma+1)^{-1/r} >  \frac{2}{27} (\gamma+1)^{-1/r}.\]  for $r \geq 3$. Therefore, $\ell(G,R) \geq \frac{1}{27}$, and the proof of the first lower bound is complete.

Now, when $\gamma \leq 2$, we reconsider the inequality $(\ref{eqnB})$ and observe that 
\[ |B| \leq \frac{2}{r - 2} \left (1 + \frac{ (\gamma+1) - 3}{|R'|} \right ) |R'| \leq  \frac{2}{r - 2} |R'|.\]
Then, for $r \in \{3,4,5,6\}$, we may set $d = \frac{2 r}{r - 2}$ and apply Theorem \ref{thm:Hypergraph}, concluding that $\ell (G, R') \geq \epsilon_{r}(d)$ and hence that $\ell(G,R) \geq \frac{1}{2} \epsilon_{r}(d)$.
\begin{itemize}
\item When $r = 3$, we let $d = 6$, and we conclude that $\ell(G,R) \geq \frac{1}{2} \epsilon_{3}(6) = \frac{21}{256}$.
\item When $r = 4$, we let $d = 4$, and we conclude that $\ell(G,R) \geq \frac{1}{2} \epsilon_4(4) = \frac{5}{27}$. 
\item When $r = 5$, we let $d = \frac{10}{3}$, and we conclude that $\ell(G,R) \geq \frac{1}{2} \epsilon_5(\frac{10}{3}) = \frac{49}{192}$.
\item When $r = 6$, we let $d = 3$, and we conclude that $\ell(G,R) \geq \frac{1}{2} \epsilon_6(3) = \frac{3}{10}$.
\end{itemize}
This completes the proof.
\end{proof}

Theorem \ref{thm:k_genus} tells us that when $r \geq 3$, $r$-connected graphs $G$ of Euler genus $\gamma$ satisfy $\kappa_\rho(G) = \Omega((\gamma + 1)^{-1/r})$.
The following example shows that the lower bound in Theorem \ref{thm:k_genus} is best possible up to a constant factor.

\begin{theorem}\label{thm:constuction}
Let $r \geq 3$ be an integer. For infinitely many values $\gamma \geq 0$, there exists an $r$-connected graph $G$ of genus $\gamma$ satisfying $\kappa_\rho(G) < 4\gamma^{-1/r}$.
\end{theorem}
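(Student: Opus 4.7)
The plan is to take $G=G_n$ to be the Levi graph of $K_n^{(r)}$ from Example~\ref{ex:highConnectivity}, where $R$ is the set of $n$ ground vertices and the remaining $\binom{n}{r}$ vertices $v_A$, one for each $A\in\binom{R}{r}$, are adjacent precisely to the elements of $A$. As $n$ grows, $\gamma(G_n)$ tends to infinity, so the values $\gamma=\gamma(G_n)$ already give infinitely many admissible choices of $\gamma$. The proof will rest on three ingredients: (i) $G_n$ is $r$-connected for all sufficiently large $n$; (ii) $\kappa_\rho(G_n)\leq (r-1)/n$; and (iii) $\gamma(G_n)\leq (r-1)\binom{n}{r}$.

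For~(ii), I will generalize the $r=3$ argument already sketched after Example~\ref{ex:highConnectivity}. Given any spanning tree $T$ of $G_n$, suppose a subset $A\subseteq R\cap\Lambda(T)$ had size~$r$. The hyperedge vertex $v_A$ has $G_n$-neighborhood equal to $A$, so every tree neighbor of $v_A$ is a leaf; hence the component of $T$ containing $v_A$ sits inside $A\cup\{v_A\}$, contradicting that $T$ spans $G_n$ when $n$ is large. Thus $|R\cap\Lambda(T)|\leq r-1$ and $\ell(G_n,R)\leq (r-1)/|R|$. For~(i), a routine bipartite-connectivity check suffices: any candidate cut $X$ of size less than~$r$ leaves more than $n-r$ ground vertices, and between any two such survivors one can route through a hyperedge vertex $v_B$ with $v_B\notin X$ and $B\cap X=\emptyset$; the count of such $v_B$ grows like $\binom{n-|X|-2}{r-2}$, which exceeds $|X|$ for $n$ large.

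For~(iii) I plan to invoke the classical fact that every connected graph $H$ admits an embedding with a single face and hence has Euler genus at most $|E(H)|-|V(H)|+1$. Applied to $G_n$, this yields
\[\gamma(G_n)\leq r\binom{n}{r}-\Bigl(n+\binom{n}{r}\Bigr)+1 \leq (r-1)\binom{n}{r}.\]
Combining (ii) and (iii) with the elementary $\binom{n}{r}\leq n^r/r!$ gives
\[\kappa_\rho(G_n)\cdot\gamma(G_n)^{1/r}\leq \frac{r-1}{n}\Bigl(\tfrac{(r-1)n^r}{r!}\Bigr)^{1/r} = \Bigl(\tfrac{(r-1)^{r+1}}{r!}\Bigr)^{1/r}.\]
The main obstacle will be to verify that this last quantity is strictly less than~$4$ for every $r\geq 3$. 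The Stirling bound $r!\geq (r/e)^r$ together with $(1-1/r)^r\leq e^{-1}$ yields the upper bound $(r-1)^{1/r}e^{1-1/r}$; since $x\mapsto x^{1/x}$ is maximized over positive integers at $x=3$, we have $(r-1)^{1/r}\leq r^{1/r}\leq 3^{1/3}<1.45$ for all $r\geq 3$, so the quantity is at most $1.45\cdot e<3.95<4$, which closes the argument.
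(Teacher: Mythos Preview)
Your proposal is correct and follows essentially the same route as the paper: both take $G$ to be the Levi graph of $K_n^{(r)}$, note $r$-connectivity, bound $\kappa_\rho(G)\le (r-1)/n$ via the isolated hyperedge vertex, and combine a crude upper bound on the Euler genus with a binomial estimate to force the constant below~$4$. The only differences are in bookkeeping: you use the one-face bound $\gamma\le |E|-|V|+1\le (r-1)\binom{n}{r}$ together with $\binom{n}{r}\le n^r/r!$, while the paper uses the weaker $\gamma\le |E|=r\binom{n}{r}$ together with $\binom{n}{r}<(ne/r)^r$; both arrive at a constant strictly under~$4$.
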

\begin{proof}
For an integer $n \geq r+1$, consider the Levi graph $G$ of $K_n^{(r)}$, which is shown in Figure \ref{fig:counterexample} with $r = 3$ and $n = 5$. Let the set of red vertices $R \subseteq V(G)$ be the independent set of $n$ vertices of degree ${{n - 1} \choose {r - 1}}$, drawn in black in Figure \ref{fig:counterexample}, and note that $R$ may be obtained as a color class of a spanning tree of $G$. It is straightforward to show that $G$ is $r$-connected, 
and since the Euler genus of $G$ is trivially at most $|E(G)|$, $G$ has Euler genus $\gamma$ satisfying $\gamma \leq r {n \choose r} <  r \left (\frac{ne}{r} \right )^{r}$. Furthermore, by construction, removing any $r$ red vertices from $G$ disconnects the graph, so $\kappa_\rho(G) \leq \frac{r-1}{n} < \frac{e(r - 1)}{r} \left ( \frac{\gamma}{r} \right )^{-1/r} < 4\gamma^{-1/r}$.
\end{proof}

Finally, we give a construction that shows that the lower bound for the robust connectivity of $3$-connected planar graphs in Theorem \ref{thm:k_genus} is correct within a factor of just over $4$.

\begin{theorem}
\label{thm:UB}
For all $\epsilon > 0$, there exists a $3$-connected planar graph $G$ satisfying $\kappa_\rho(G) \leq \frac{1}{3} + \epsilon$.
\end{theorem}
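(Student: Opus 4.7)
The plan is to realize, for every $\varepsilon>0$, the construction already hinted at in Figure~\ref{fig:biggerblowup}. Start from an arbitrarily large 3-connected cubic planar graph $H$ (for instance, the prism over a long cycle, or any infinite family of 3-connected cubic planar graphs). Blow up each vertex $v\in V(H)$ by a fixed planar gadget $\Gamma$ on 36 vertices---a ``modified circular ladder''---equipped with three designated port-triples on its outer hexagon, and replace each edge $uv\in E(H)$ by a single degree-4 vertex $x_{uv}$ whose four neighbours are the chosen port vertices of $\Gamma(u)$ and $\Gamma(v)$, as drawn in the figure. Call the resulting graph $G$, and note that $|V(G)|=36|V(H)|+|E(H)|=\Theta(|V(H)|)$.

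First I would confirm that $G$ is planar and 3-connected. Planarity follows from compatible plane embeddings of $H$, of $\Gamma$, and of the edge-vertices $x_{uv}$ slotted along each edge of the embedding of $H$. For 3-connectivity, one performs a case check: any hypothetical 2-separator either sits entirely inside a single copy of $\Gamma$ (impossible because $\Gamma$ is designed to be locally 3-connected, with the three ports providing three internally disjoint escapes to the rest of $G$), or uses an edge-vertex $x_{uv}$ (which has degree 4 and so needs more than one partner to disconnect), or uses two vertices from different gadgets (which leaves the rest of each $\Gamma$ attached to $H$ via two unused ports). I would then single out the adversarial set $R\subseteq V(G)$ as a fixed collection of ``inner'' vertices drawn from each copy of $\Gamma$, organized into pairwise disjoint \emph{blocking triples} $\{a,b,c\}$ whose members share a common low-degree neighbour $w_{abc}$ inside the gadget and whose only other neighbours inside $\Gamma$ lie on two short, controlled paths.

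The central lemma is the following: for every spanning tree $T$ of $G$, at most one vertex of each blocking triple is a leaf of $T$. The proof is a local argument at the triple: if two of $a,b,c$ were leaves of $T$, then the single tree-edge at each of them cannot be to $w_{abc}$, so $w_{abc}$ must reach the rest of $G$ via the third member of the triple; propagating this constraint along the gadget's ladder structure (which has only one further low-degree escape) either forces a second tree-path between the same two components, creating a cycle in $T$, or strands $w_{abc}$ together with the third triple-member from the rest of the gadget, contradicting that $T$ is spanning. Summing over the $\Theta(|V(H)|)$ triples yields
\[
|R\cap \Lambda(T)|\ \le\ \tfrac{1}{3}|R| + O(|V(H)|),
\]
where the additive error collects only the finitely many triples per gadget that interact with the edge-vertices $x_{uv}$. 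Since $|R|$ grows as a large constant times $|V(H)|$, choosing $H$ large enough drives the error below $\varepsilon|R|$, giving $\ell(G,R)\le \tfrac{1}{3}+\varepsilon$ and hence $\kappa_\rho(G)\le \tfrac{1}{3}+\varepsilon$.

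The main obstacle will be making the blocking-triple lemma airtight: one must rule out exotic spanning trees that escape through the edge-vertices $x_{uv}$ or along the outer hexagon of $\Gamma$ and so evade the local trap. This is where the precise adjacencies fixed in Figure~\ref{fig:biggerblowup} are essential---the rung-subdivision vertices serve as the low-degree blockers $w_{abc}$, while the outer hexagon provides enough extra edges to secure 3-connectivity without opening a fourth ``escape'' per triple. Once these adjacencies are written down, the lemma becomes a finite, gadget-local case inspection; 3-connectivity and planarity follow similarly from finite checks, and the final asymptotic bound $\kappa_\rho(G)\le \tfrac{1}{3}+\varepsilon$ is an immediate consequence of the triple-counting bound above.
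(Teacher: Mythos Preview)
Your central lemma is impossible in a $3$-connected graph, so the approach cannot work as written. You claim that for every spanning tree $T$, at most one vertex of each blocking triple $\{a,b,c\}$ is a leaf of $T$. But two vertices $x,y$ can simultaneously be leaves of some spanning tree of $G$ if and only if $G\setminus\{x,y\}$ is connected (extend a spanning tree of $G\setminus\{x,y\}$ by attaching $x$ and $y$ as leaves; conversely, deleting two leaves from a spanning tree leaves a connected spanning subgraph of $G\setminus\{x,y\}$). Hence ``at most one of $a,b,c$ is a leaf in every $T$'' is equivalent to saying that each of the three pairs $\{a,b\},\{a,c\},\{b,c\}$ is a $2$-cut of $G$, directly contradicting $3$-connectivity. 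The local argument you sketch (``the single tree-edge at each of them cannot be to $w_{abc}$'') is also unjustified: both $a$ and $b$ may well hang as leaves from $w_{abc}$ while $c$ carries $w_{abc}$ to the rest of the tree. The best a degree-$3$ common neighbour can force is that not all three of $a,b,c$ are leaves, which yields only $\ell(G,R)\le \tfrac{2}{3}+\varepsilon$.

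The paper's proof avoids gadget-local arguments entirely and is much shorter. It replaces each vertex of a large $3$-edge-connected cubic planar graph $H$ by a \emph{triangle} (not the $36$-vertex ladder from the figure), and for each edge $uv\in E(H)$ it inserts a single vertex $r_{uv}$ of degree~$4$ joined to two vertices of each of the two triangles; the set $R$ is $\{r_{uv}:uv\in E(H)\}$, so $|R|=|E(H)|=\tfrac{3}{2}|V(H)|$. The key global observation is that for any $R'\subseteq R$, deleting $R'$ from $G$ leaves a connected graph only if the corresponding edge set $E_{R'}\subseteq E(H)$ leaves $H$ connected; hence $|R\setminus R'|\ge |V(H)|-1=\tfrac{2}{3}|R|-1$, i.e.\ $|R'|\le \tfrac{1}{3}|R|+1$. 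Since the $R$-leaves of any spanning tree are such a removable set, this gives $\ell(G,R)\le \tfrac{1}{3}+\tfrac{1}{|R|}$ immediately. If you want to rescue your write-up, take $R$ to be the edge-vertices and use this spanning-tree-in-$H$ counting argument rather than a per-gadget triple bound.
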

\begin{proof}
Let $H$ be a sufficiently large $3$-regular $3$-edge-connected planar graph.

We construct a graph $G$ and a subset $R \subseteq V(G)$ as follows. For each vertex $v \in V(H)$, we add a triangle $T_v$ to $G$.
Then, for each edge $uv \in E(H)$, we add a vertex $r_{uv}$ to $G$, and we let $r_{uv}$ be adjacent to two vertices of $T_u$ and two vertices of $T_v$.
When adding edges, we require that for each triangle $T_v$, no two vertices $r_{uv}$ and $r_{wv}$ are adjacent to the same vertex pair of $T_v$.
We let each vertex $r_{uv}$ be a member of the set $R$. We may construct $G$ to be planar and $3$-connected. We show an example of a graph $G$ constructed in this way when $H \cong K_4$ in Figure \ref{fig:biggerblowup}, except that in Figure \ref{fig:biggerblowup}, each vertex of $H$ is replaced not with a triangle, but with a larger graph on $36$ vertices. We observe that since $H$ is $3$-regular, $|V(H)| = \frac{2}{3}|E(H)| = \frac{2}{3}|R|$.

Now, consider a subset $R' \subseteq R$, and consider the set $E_{R'} := \{e \in E(H): r_e \in R'\}$. If $G \setminus R'$ is connected, then $E(H) \setminus E_{R'}$ must contain a spanning tree in $H$ and hence must contain at least $|V(H)| - 1 = \frac{2}{3}|R| - 1$ vertices. Therefore, $|E_{R'}| \leq \frac{1}{3}|R| + 1$, and the number of vertices in $R$ that can be removed from $G$ without disconnecting $G$ is at most $\frac{1}{3}|R| + 1$. By letting $|R| = |E(H)| \geq \frac{1}{\epsilon}$, it follows that  $\kappa_\rho(G) \leq \frac{1}{3} + \epsilon$.
\end{proof}

}%END of lv

\lv{\section{Conclusions}}
\sv{\noindent\textbf{Conclusions.~}}
We have shown tight asymptotic bounds for the robust connectivity of $r$-connected graphs embedded on surfaces\lv{ in Section~\ref{sec:gcgenus}}.
We believe that the notion of robust connectivity is an interesting yet unexplored concept.
\lv{Looking for bounds for robust connectivity in terms of other basic graph properties would be of some interest. }%
Moreover, we show a connection between robust connectivity of edge-maximal graphs and the notion surface connectivity.
We propose a further study of surface connectivity which\sv{,} \lv{connects topological properties of a surface with the properties of graphs drawn on that surface. T}%
\sv{t}%
o the best of our knowledge,
\lv{this research direction }%
has not been considered before.
For planar graphs, the connection we showed provides another equivalent formulation of the famous Albertson Berman conjecture, and our results may give another direction to attack the conjecture itself.
For surfaces of higher genus,
this connection gives rise a more general question. \sv{The graph $K_7$ may be embedded on any surface with at least one handle such that no more than three vertices can be removed without disconnected the surface. Hence, we ask:}%

\begin{question}\label{q:high-surface}
  Is surface connectivity always at least $\frac{3}{7}$ for any surface?
\end{question}
This is a widely open question, and we do not even know whether the correct lower bound decreases with increasing Eulerian genus. \lv{The graph in Figure \ref{fig:K7} shows that if Question \ref{q:high-surface} has an affirmative answer, then $\frac{3}{7}$ is best possible.}

\lv{
We want to reiterate one more question.
For planar triangulations, we conjectured (Conjecture~\ref{conj:K}) that the correct bound for the robust connectivity is $\frac{1}{2}$, which is, if true, the best possible.
For the maxleaf number, a lower bound of $\frac{1}{2}n$ was proved~\cite{Albertson90} for a planar triangulation on $n$ vertices, but it is of great interest to find out what is the correct bound.
We ask whether the trivial upper bound of $\frac{2}{3}n$ given by triangle is always achievable.
\begin{question}\label{q:maxleaf}
  Is the maxleaf number of a planar triangulation on $n$ vertices always at least $\frac{2}{3}n$?
\end{question}
}

\lv{
\bibliographystyle{plainurl}
\bibliography{RobustBib}
}
\sv{

% ---- Bibliography ----
%
% \begin{thebibliography}{6}
%

% \bibitem {smit:wat}
% Smith, T.F., Waterman, M.S.: Identification of common molecular subsequences.
% J. Mol. Biol. 147, 195?197 (1981). \url{doi:10.1016/0022-2836(81)90087-5}
%
% \bibitem {may:ehr:stein}
% May, P., Ehrlich, H.-C., Steinke, T.: ZIB structure prediction pipeline:
% composing a complex biological workflow through web services.
% In: Nagel, W.E., Walter, W.V., Lehner, W. (eds.) Euro-Par 2006.
% LNCS, vol. 4128, pp. 1148?1158. Springer, Heidelberg (2006).
% \url{doi:10.1007/11823285_121}
%
% \bibitem {fost:kes}
% Foster, I., Kesselman, C.: The Grid: Blueprint for a New Computing Infrastructure.
% Morgan Kaufmann, San Francisco (1999)
%
% \bibitem {czaj:fitz}
% Czajkowski, K., Fitzgerald, S., Foster, I., Kesselman, C.: Grid information services
% for distributed resource sharing. In: 10th IEEE International Symposium
% on High Performance Distributed Computing, pp. 181?184. IEEE Press, New York (2001).
% \url{doi: 10.1109/HPDC.2001.945188}
%
% \bibitem {fo:kes:nic:tue}
% Foster, I., Kesselman, C., Nick, J., Tuecke, S.: The physiology of the grid: an open grid services architecture for distributed systems integration. Technical report, Global Grid
% Forum (2002)
%
% \bibitem {onlyurl}
% National Center for Biotechnology Information. \url{http://www.ncbi.nlm.nih.gov}
%
%
% \end{thebibliography}

}

\end{document}